\newsavebox{\toy}
\savebox{\toy}{\framebox[0.65em]{\rule{0cm}{1ex}}}
\newcommand{\QED}{\usebox{\toy}\end{demo}}
\numberwithin{equation}{section}
\newtheorem{theorem}{Theorem}[section]
\newtheorem{lemma}[theorem]{Lemma}
\newtheorem{proposition}[theorem]{Proposition}
\newtheorem{cor}[theorem]{Corollary}
\def\qed{\hfill\rule{.2cm}{.2cm}\par\medskip\par\relax}
\newcommand{\bd}{\begin{displaymath}}
\newcommand{\ed}{\end{displaymath}}
\newcommand{\R}{{\mathbb{R}}}
\newcommand{{\rd}}{\R^d}
\newcommand{\IP}{{\mathbb P}}
\newcommand{\E}{\mathbb E}
\newcommand{\8}{\infty}
\newcommand{\un}{1 \hspace{-0.6ex}{\rm I}}
\newcommand{\eu }{{\bf e_1}}
\renewcommand{\b}{\beta}
\newcommand{\D}{\Delta}
\newcommand{\e}{\varepsilon}
\newcommand{\tht}{\theta}
\newcommand{\dd}{\,\text{\rm d}}             
\newcommand{\dB}{\dot B}
\newcommand{\cT }{{\cal T}}
\newcommand{\vphi}{\varphi}
\newcommand{\nn}{\nonumber}
\newcommand{\fC }{{\mathfrak C}}
\newcommand{\ssup}[1] {{\scriptscriptstyle{({#1}})}}
\def\section{\@startsection{section}{1}{\z@}{-3.5ex plus -1ex minus 
 -.2ex}{2.3ex plus .2ex}{\bf}}
\def\subsection{\@startsection{subsection}{2}{\z@}{-3.25ex plus -1ex minus 
 -.2ex}{1.5ex plus .2ex}{\bf}}
\newcommand{\cvlaw}{\stackrel{\rm{ law}}{\longrightarrow}}
\newcommand{\eqlaw}{\stackrel{\rm{ law}}{=}}
   \def\MR#1{}  }
\begin{document}

\pagestyle{myheadings}
\markboth{FC-CC-CM}{Fluctuation and Rate of Convergence for the SHE in Weak Disorder}

\title{Fluctuation and Rate of Convergence of the Stochastic Heat Equation in Weak Disorder}
\author{Francis Comets$^{1}$, Cl\'ement Cosco$^{1}$, Chiranjib Mukherjee$^{2}$}

\maketitle

{\footnotesize 
\noindent$^{~1}$Universit\'e Paris Diderot\\
Laboratoire de Probabilit\'es, Statistique et Mod\'elisation\\ LPSM (UMR 8001 CNRS, SU, UPD)\\
B\^atiment Sophie Germain, 8 place Aur\'elie Nemours, 75013 Paris\\
\noindent {\tt comets@lpsm.paris,  ccosco@lpsm.paris}
\\

\noindent$^{~2}$University of M\"unster\\
Fachbereich Mathematik und Informatik\\
Einsteinstra\ss e 62, M\"unster, D-48149\\
\noindent{\tt chiranjib.mukherjee@uni-muenster.de}

\begin{abstract} 
We consider the stochastic heat equation on $\R^d$ with multiplicative space-time white noise noise smoothed in space. 
For $d\geq 3$ and small noise intensity, the solution is known to converge to a strictly positive random variable as the smoothing parameter vanishes. In this regime, we study the rate of convergence and show that the pointwise 
fluctuations of the smoothened solutions as well as that of the underlying martingale of the Brownian directed polymer converge to a Gaussian limit. 
\end{abstract}
\textbf{Keywords:} SPDE, stochastic heat equation, directed polymers, random environment, weak disorder, Edwards-Wilkinson limit\\
\\[-.3cm]
\textbf{AMS 2010 subject classifications:}
Primary 60K35. Secondary 35R60, 35Q82, 60H15, 82D60

}


\section{Introduction and the result.}

We fix a spatial dimension $d\geq 3$ and consider a space-time Gaussian white noise $\dB$ on $\R_+\times \rd$. It is formally described by a family $\{\dB(\vphi)\}_{\varphi\in \mathcal S(\R_+\times \rd)}$
of Gaussian random variables  on a complete probability space $(\Omega,\mathcal B, \mathbb P)$ 
with mean 0 and covariance
\begin{equation} 
\begin{aligned}
\mathbb E\big[ \dB(\varphi_1)\,\, \dB(\varphi_2)\big]&=\E \left[ \int_0^\8 \int_{\rd} \dd t \,\dd x \,\, \dB(t,x)\,\vphi_1(t,x) \times \int_0^\8 \int_{\rd} \dd t \, \dd x \,\dB(t,x)\,  \varphi_2(t,x) \right] 
\\
&= \int_0^\8 \int_{\rd} \varphi_1(t,x) \varphi_2(t,x) \dd t \dd x \; .
\end{aligned}
\end{equation}
with $\varphi_1,\varphi_2$ being in the Schwartz space $\mathcal S(\R_+\times \rd)$ of all smooth and rapidly decreasing functions on $\mathbb R_+\times \mathbb R^d$. 
Throughout the article, $\mathbb E$ will denote expectation with respect to $\mathbb P$.
\medskip 

\noindent
We fix a  non-negative, smooth, spherically symmetric function $\phi: \R^d \to \R_+$ with support in the Euclidean ball $B(0,1/2)$ and normalized to have unit mass $\int_{\rd} \phi(x)\dd x=1$, and we define $\phi_\e(\cdot)=\e^{-d}\phi(\cdot/\e)$. Then,
\begin{equation} \nn
B_{t,\e}(x) = \dB \left( \varphi_{\e,t,x}\right) \;\qquad \mbox{with}\,\,\, \varphi_{\e,t,x} (s,y) =  {\un}_{[0,t]}(s) \phi_\e(y - x)\;, 
\end{equation}
denotes the spatially smoothened white noise, which is again a centered Gaussian process with covariance 
\begin{equation}\nn
\mathbb E[ B_{t,\e}(x) B_{s,\e}(y) ] = (s\wedge t) \big(\phi_\e\star \phi_\e\big)(x-y) = (s\wedge t) \, \e^{-d} V((x-y)/\e),
\end{equation}
where $V=\phi \star \phi$ is a smooth function supported in the ball $B(0,1)$. 
In particular, for any $x$, $(B_{t,\e}(x); t \geq 0)$ is a linear Brownian motion with diffusion constant $\e^{-d}V(0)$.
Following \cite{MSZ16}, we consider the  (smoothed) {\it multiplicative noise stochastic heat equation}: 

\begin{equation}\label{eq1-spde}
\dd u_{\e,t} = \frac12 \D u_{\e,t} \dd t + \b \e^{\frac{d-2}2}   u_{\e,t} \dd B_{\e,t} \;,\qquad\,\,   u_{\e,0}(x) =1\;,
\end{equation}
where $\beta > 0$ and  the stochastic differential is interpreted in the classical Ito sense. 
Then, by Feynman-Kac formula \cite[Theorem 6.2.5]{Kunita}
\begin{equation}\label{eq-FK}
u_{\e,t}(x)=E_x \bigg[ \exp\bigg\{\beta\e^\frac{d-2}{2} \,\int_0^t \int_{\rd} \, \phi_\e(W_{t-s}-y) \, \dB(s, y)\, \dd s\,\, \dd y -  \frac{\beta^2\,t\,\e^{-2}} 2\,\, V(0)\bigg\}\bigg]\;.
\end{equation} 
By time reversal, for any fixed $t>0$ and $\e>0$,
\begin{equation} \nn
 u_{\e,t} (\cdot)\,\, \eqlaw \,\,M_{\e^{-2}t}(\e^{-1} \cdot)\;,
\end{equation}
where 
\begin{equation} \label{eq:M}
M_T(x) = E_x\left[ \exp\left\{\b \int_0^T \int_{\rd} \phi(W_s-y) \dB(s,y) \dd s\, \dd y - \frac{\b^2T}{2}V(0)\right\} \right],
\end{equation}
and $E_x$ denotes the expectation for the law $P_x$ of a Brownian motion $W=(W_s)_{s\geq 0}$ starting from $x\in\rd$ and independent of the white noise $\dB$. See (Eq.(2.6) in \cite{MSZ16}) for details. Then it was shown \cite[Theorem 2.1 and Remark 2.2]{MSZ16} that for $\b>0$ sufficiently small and any test function $f\in\mathcal C_c^\infty(\rd)$, 
\begin{equation}\label{eq-MSZ}
\int_{\rd} u_\e(t,x) \, f(x)\,\dd x\to \int_{\rd} \overline u(t,x) \, f(x)\,\dd x
\end{equation}
 as $\e\to 0$ in probability, with $\overline u$ solving the heat equation 
\begin{equation}\label{eq-heat}
\partial_t \overline u= \frac 12 \Delta \overline u
\end{equation}
with unperturbed diffusion coefficient. Furthermore, it was also shown in \cite{MSZ16} that, with $\beta$ small enough, and 
for any $t>0$ and $x\in \rd$,  
$u_{\e,t}(x) $ converges in law to a non-degenerate random variable $M_\infty$ which is almost surely strictly positive, while $u_{\e,t}(x)$ converges in probability to zero if $\beta$ is chosen large. 
The law of $M_\infty$ was not determined in \cite{MSZ16}. 

Following the standard terminology used in the literature on discrete directed polymers, the Feynman-Kac representation \eqref{eq:M} relates $M_T$ (and thus, $u_{\e,t}$) to the {\it{(quenched) polymer partition function}}, and
existence of a strictly positive limit $M_\infty$ for small disorder strength $\beta$ is referred to as the {\it{weak-disorder regime}}, while for $\beta$ large, a vanishing partition function $\lim_{T\to\infty} M_T$ underlines the {\it{strong disorder phase}} (\cite{CSY04}). The polymer model corresponding to \eqref{eq:M} is known as Brownian directed polymer in a Gaussian environment, and the reader is refered to \cite{CC18} for a review of a similar model driven by a Poissonian noise.

\medskip 

Throughout this article we will focus deep inside the weak disorder regime, i.e., we will assume that $\beta$ is small enough and $M_\infty$ is a non-degenerate strictly positive random variable. The goal of the present article is to study the rescaled {\it{pointwise fluctuations}} 
$$
T^{\frac{d-2}4}\bigg(\frac{M_T(x)-M_\infty(x)}{M_T(x)}\bigg)
$$
for $x\in \rd$ as $T\to\infty$,  where $M_\8(x)$ is the a.s. limit of the positive martingale $M_T(x)$. Here is our first main result:
\medskip

\begin{theorem} \label{th:tclMT}
There exists $\beta_0\in (0,\infty)$ such that for $\beta<\beta_0$, 
and $x\in \rd$, with $d\geq 3$,
\begin{equation} \nn
T^{\frac{d-2}4}\bigg(\frac{M_T(x)-M_\infty(x)}{M_T(x)}\bigg)  \cvlaw N\big(0,\sigma^2(\beta)\big),
\end{equation}
as $T\to\infty$, 
where 
\begin{equation}\label{def-sigma-beta}
\sigma^2(\beta)=\frac{2}{(d-2)(2\pi)^{d/2}} \,\, \int_{\rd} \dd y \,\, V(\sqrt 2y)\,\,E_y\bigg[e^{\beta^2\int_0^\infty V(\sqrt 2 W_s)\,\dd s}\bigg].
\end{equation}
Moreover,  $T^{\frac{d-2}4}({M_T(x)-M_\infty(x))}/{M_\8(x)}$
 converges in law  to the same limit. 
\end{theorem}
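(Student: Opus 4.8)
The plan is to study the centered tail increment
\[ N_T:=T^{\frac{d-2}{4}}\big(M_\infty(x)-M_T(x)\big) \]
and to prove that $N_T\cvlaw \sigma(\b)\,M_\infty(x)\,Z$ with $Z\sim N(0,1)$ independent of $M_\infty(x)$. Since $M_T(x)\to M_\infty(x)$ almost surely and, in the weak-disorder regime, in $L^2$, with $M_\infty(x)>0$ a.s., Slutsky's lemma then gives $N_T/M_T(x)\cvlaw N(0,\sigma^2(\b))$, and likewise with $M_\infty(x)$ in the denominator (the sign flip between $M_T-M_\infty$ and $M_\infty-M_T$ is immaterial for a centered Gaussian). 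Writing $M_s(x)=E_x[e_s(W)]$, with $e_s(W)$ the path-wise exponential weight of \eqref{eq:M} run up to time $s$, the process $e_s(W)$ is for each fixed path an Itô exponential, so $(M_{T+t}(x)-M_T(x))_{t\ge0}$ is a continuous $L^2$-martingale in the white-noise filtration $(\mathcal F_T)$ with representation $M_\infty(x)-M_T(x)=\b\int_T^\infty\int_{\rd}\Psi_s(y)\,\dB(s,y)\,\dd y\,\dd s$, where $\Psi_s(y)=E_x[e_s(W)\,\phi(W_s-y)]$.

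The core of the proof is the asymptotics of the associated bracket. Using $\int_{\rd}\phi(a-y)\phi(b-y)\,\dd y=V(a-b)$, the tail bracket equals $B_T:=\langle M\rangle_\infty-\langle M\rangle_T=\b^2\int_T^\infty E_x^{\otimes2}\big[e_s(W)\,e_s(W')\,V(W_s-W'_s)\big]\,\dd s$ for two independent copies $W,W'$ started at $x$. Integrating the future noise over $\mathcal F_T$ turns $e_s(W)e_s(W')$ into $e_T(W)e_T(W')\exp\{\b^2\int_T^s V(W_r-W'_r)\,\dd r\}$; performing the $s$-integral through $\frac{\dd}{\dd s}\exp\{\b^2\int_T^s V\}=\b^2V(W_s-W'_s)\exp\{\b^2\int_T^s V\}$ and using the Markov property of the difference $W-W'$ (a Brownian motion of diffusivity $2$) collapses the conditional bracket to
\[ \E[B_T\mid\mathcal F_T]=E_x^{\otimes2}\big[e_T(W)\,e_T(W')\,g(W_T-W'_T)\big], \]
where $g(w)=E^{\otimes2}_{0}\big[\exp\{\b^2\int_0^\infty V(w+W_s-W'_s)\,\dd s\}\big]-1$ is bounded with transient tail $g(w)\sim c\,|w|^{-(d-2)}$. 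The decisive step is then
\[ T^{\frac{d-2}{2}}\,\E[B_T\mid\mathcal F_T]\ \longrightarrow\ \sigma^2(\b)\,M_\infty(x)^2 \qquad\text{in probability.} \]
Its proof combines the tail of $g$, a local central limit theorem for the weighted two-replica endpoint law at the coincidence scale, and the decorrelation of the two far-apart late excursions, which makes the weight factorize and reproduces the factor $M_\infty(x)^2$; tracking the Gaussian density and the constants produces exactly the formula \eqref{def-sigma-beta}.

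Granting the bracket limit, I would pass to the distributional statement by a stable central limit theorem for continuous martingales. Via the Dambis--Dubins--Schwarz time change one writes $N_T=\gamma^T\big(T^{\frac{d-2}{2}}B_T\big)$ for a Brownian motion $\gamma^T$; the convergence $T^{\frac{d-2}{2}}B_T\to\sigma^2(\b)M_\infty(x)^2$, together with the asymptotic independence of the fast noise on $[T,\infty)$ from the slow variable $M_\infty(x)$ (which is already well approximated by the $\mathcal F_T$-measurable $M_T(x)$), yields the conditional characteristic function limit $\E\big[e^{\,i\theta N_T}\mid\mathcal F_T\big]\to\exp\{-\tfrac12\theta^2\sigma^2(\b)M_\infty(x)^2\}$ in probability. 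This gives the joint convergence $(M_T(x),N_T)\cvlaw\big(M_\infty(x),\sigma(\b)M_\infty(x)Z\big)$ with $Z$ independent of $M_\infty(x)$, and dividing by $M_T(x)$ or $M_\infty(x)$ completes the proof.

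I expect the bracket convergence in probability, with the exact constant, to be the main obstacle. One must show not merely that $T^{\frac{d-2}{2}}\E[B_T]\to\sigma^2(\b)\,\E[M_\infty^2]$ (a second-moment computation) but that the random quantity $\E[B_T\mid\mathcal F_T]$ concentrates, after scaling, on $\sigma^2(\b)M_\infty(x)^2$. This requires uniform local-CLT estimates for the weighted two-replica measure, control of the $|w|^{-(d-2)}$ tail of $g$ together with integrability of $\exp\{\b^2\int_0^\infty V(\sqrt2\,W_s)\,\dd s\}$ (finite only for $\b<\b_0$), and the factorization identifying the $M_\infty(x)^2$ factor. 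A secondary and more routine task is verifying the nesting/Lindeberg condition behind the stable CLT, for which the needed fourth-moment and $L^2$ bounds again hold only deep in the weak-disorder regime $\b<\b_0$.
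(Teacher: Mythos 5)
Your proposal is correct in outline, but it takes a genuinely different route from the paper --- essentially the stable/mixing martingale CLT strategy of \cite{CL17}, which the authors explicitly mention and deliberately avoid. You work with the unnormalized tail martingale $N_T=T^{\frac{d-2}{4}}(M_\infty-M_T)$, whose limiting conditional variance is the \emph{random} variable $\sigma^2(\b)M_\infty^2$, so you are forced into stable convergence (conditional characteristic functions, asymptotic independence of the late noise from $M_\infty$, cf. \cite{HL15}) to get the mixed-normal limit $\sigma(\b)M_\infty Z$ before dividing out by Slutsky. The paper instead normalizes first: it sets $G^{\ssup T}_\tau=T^{\frac{d-2}{4}}(M_{\tau T}/M_T-1)$, a continuous martingale in $\tau$ for the filtration $(\mathcal B^{\ssup T}_\tau)_{\tau\ge1}$ since $M_T$ is measurable at time $\tau=1$; the division by $M_T$ makes the limiting bracket the \emph{deterministic} function $g(\tau)=\sigma^2(\b)\big(1-\tau^{-\frac{d-2}{2}}\big)$, so a plain functional CLT for continuous martingales \cite{JS87} suffices, with no stable-convergence machinery --- at the price of a two-step limit $T\to\infty$ then $\tau\to\infty$, which requires the $L^2$ rate \eqref{eq:asdifnorm} of Proposition \ref{prop:covar}. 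Your route handles the whole tail $[T,\infty)$ at once, so no double limit appears, but note one soft spot you only half-state: the stable CLT needs convergence in probability of the \emph{actual} scaled bracket $T^{\frac{d-2}{2}}\big(\langle M\rangle_\infty-\langle M\rangle_T\big)$, not merely of its $\mathcal F_T$-conditional expectation $E_x^{\otimes2}\big[e_T(W)e_T(W')\,g(W_T-W'_T)\big]$ (your collapse computation for which is correct, and your $g$ is exactly the covariance kernel in \eqref{eq:covM}); the missing concentration is a second-moment estimate precisely parallel to the $L^2$ statement of Proposition \ref{prop:claim1}, whose proof --- the first- and second-moment computations of Section \ref{sec:prclaim1}, with Brownian-bridge decoupling and Khas'minskii bounds --- is the same hard analytic core your ``decisive step'' would have to reproduce, with the same constant bookkeeping $\fC_3=\frac{d-2}{2}\sigma^2(\b)$ after integrating $s^{-d/2}$ over $[T,\infty)$. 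In short, the two proofs share their real substance (the bracket asymptotics with the exact constant) and differ in the probabilistic superstructure: yours buys a one-shot mixed CLT at the cost of stable convergence and Lindeberg-type checks; the paper's $M_T$-normalization buys an elementary deterministic-bracket CLT at the cost of Proposition \ref{prop:covar} and the $\tau$-double limit.
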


We point that the variance $\sigma^2(\beta)$ in Theorem \ref{th:tclMT} becomes infinity for $\beta>\beta_c$ for some $\beta_c\in(0,\infty)$. We also conjecture that Theorem \ref{th:tclMT} holds in the whole ``$L^2$-region", i.e., for all $\beta$ such that $M_T$ remains bounded in $L^2$-norm, but such a proof seems out of reach at the present time.

We now derive the rate of convergence of the solution of the stochastic heat equation \eqref{eq1-spde}. For simplicity we focus on the case $x=0$, although the result easily extends to the general case. 
We will write $M_T=M_T(0)$. 
Note that 
\begin{equation}\label{def-F}
\mathscr F_T(\dB)= E_0\big[ \mathrm e^{\beta \dB ( \varphi)  - \b^2  V(0)T/2} \big] \;,\qquad {\rm with} \;\;\;  \varphi(t,x)= {\un}_{[0,T]}(t) \phi(W_t - x)\;,
\end{equation}
defines a measurable function $\mathscr F_T$ on the path space $(\Omega,\mathcal B,\mathbb P)$ of the white noise, such that $\mathscr F_T(\dB)=M_T$.
Since $M_T$ converges almost surely, we can select a representative $\mathscr F_\8(\dB)$ of 
the limit, i.e.,  $\mathscr F_\8(\dB)=M_\8$. 
Without loss of generality, we can assume that the definition of the white noise $\dB$ extends to negative times, and  for $\e>0, T>0$,  the random process 
$\dB^{\ssup{\e,T}}$ given by
  \begin{equation}\nn
\dB^{\ssup{\e,T}}(\varphi) = \e^{-\frac{d+2}{2} }\int_\R \int_{\rd} \varphi(T- \e^{-2}t, \e^{-1}y) \dB(t, y) \dd t\, \dd y
\end{equation}
is itself a Gaussian white noise. Moreover, in view of \eqref{def-F} and Brownian scaling (for $W$), the solution \eqref{eq-FK}
of the stochastic heat equation \eqref{eq1-spde} can be rewritten as
\begin{equation}\nn
u_{\e,t}(0) = \mathscr F_{\e^{-2}t} \big( \dB^{(\e,\e^{-2}t)} \big).
\end{equation}
Since $\dB^{\ssup{\e,T}} \eqlaw \dB$, we have for $T=\e^{-2}t$,
\begin{equation}\nn
\frac{\mathscr F_\8 ( \dB^{\ssup{\e,T}} )}{u_{\e,t}(0) }=
\frac{\mathscr F_\8 ( \dB^{\ssup{\e,T}} )}{\mathscr F_{\e^{-2}t} \left( \dB^{\ssup{\e,\e^{-2}t}} \right)}  \eqlaw
\frac{\mathscr F_\8 ( \dB)}{\mathscr F_{\e^{-2}t} ( \dB)}  = \frac{ M_\8}{M_{\e^{-2}t}}
\end{equation}
and the following result is a direct consequence of Theorem \ref{th:tclMT}.
\begin{cor}\label{coro:final}
Fix $\b < \b_0$ as in Theorem \ref{th:tclMT}. There exists a functional $\mathscr F_\8$ on the path space $(\Omega,\mathcal B, \mathbb P)$ which is positive and measurable with $\E [\mathscr F_\8(\dB)]=1$
and $M_\8 = \mathscr F_\8 (\dB)$, such that for all $t>0$, as $\e\to 0$,
\begin{equation}\nn
\e^{-\frac{d-2}{2} } \left( \frac{\mathscr F_\8 ( \dB^{\ssup{\e,\e^{-2}t}} )}{u_{\e,t}(0) } -1 \right)
 \cvlaw N\big(0,\sigma^2(\beta)t^{-\frac{d-2}{2}}\big) \;.
\end{equation}
Moreover, $\e^{-\frac{d-2}{2} } \left( \frac{u_{\e,t}(0) }{\mathscr F_\8 ( \dB^{\ssup{\e,\e^{-2}t}} )} -1 \right)$ converges in law to the same limit.
\end{cor}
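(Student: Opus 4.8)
The plan is to deduce the Corollary directly from Theorem~\ref{th:tclMT} through the exact distributional scaling identity already recorded above. Writing $T=\e^{-2}t$, the discussion preceding the statement gives
\begin{equation}\nn
\frac{\mathscr F_\8\big(\dB^{\ssup{\e,T}}\big)}{u_{\e,t}(0)}\ \eqlaw\ \frac{M_\8}{M_T}.
\end{equation}
The only numerical input I need is the elementary identity $T^{\frac{d-2}4}=\e^{-\frac{d-2}2}\,t^{\frac{d-2}4}$, which converts the $T$-rate of the Theorem into the $\e$-rate of the Corollary. Thus the whole argument is a change of scaling variable together with two soft facts (symmetry of a centered Gaussian and Slutsky's lemma); all the analytic content lives in Theorem~\ref{th:tclMT}.

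First I would rewrite the centered quantity of the Theorem as $\frac{M_T-M_\8}{M_T}=1-\frac{M_\8}{M_T}$, so that
\begin{equation}\nn
\e^{-\frac{d-2}2}\Big(\frac{M_\8}{M_T}-1\Big)=-\,\e^{-\frac{d-2}2}\,T^{-\frac{d-2}4}\cdot T^{\frac{d-2}4}\Big(\frac{M_T-M_\8}{M_T}\Big).
\end{equation}
Since $\e^{-\frac{d-2}2}T^{-\frac{d-2}4}=t^{-\frac{d-2}4}$ when $T=\e^{-2}t$, Theorem~\ref{th:tclMT} shows the right-hand side converges in law to $-\,t^{-\frac{d-2}4}N\big(0,\sigma^2(\beta)\big)$, i.e.\ to $N\big(0,\sigma^2(\beta)t^{-\frac{d-2}2}\big)$ after using that a centered Gaussian is symmetric. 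Transferring this through the equality in law gives the first assertion
\begin{equation}\nn
\e^{-\frac{d-2}2}\Big(\frac{\mathscr F_\8(\dB^{\ssup{\e,\e^{-2}t}})}{u_{\e,t}(0)}-1\Big)\ \cvlaw\ N\big(0,\sigma^2(\beta)t^{-\frac{d-2}2}\big).
\end{equation}

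For the reciprocal statement I would set $Y_\e=\mathscr F_\8(\dB^{\ssup{\e,\e^{-2}t}})/u_{\e,t}(0)$. The convergence just proved forces $Y_\e\to1$ in probability (the normalized fluctuations are tight while $\e^{\frac{d-2}2}\to0$), hence $Y_\e^{-1}\to1$ in probability by continuity of $x\mapsto 1/x$ at $1$. Writing $Y_\e^{-1}-1=-(Y_\e-1)\,Y_\e^{-1}$ and multiplying by $\e^{-\frac{d-2}2}$, Slutsky's lemma and Gaussian symmetry yield the same limit for $\e^{-\frac{d-2}2}\big(u_{\e,t}(0)/\mathscr F_\8(\dB^{\ssup{\e,\e^{-2}t}})-1\big)$. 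It then remains to record the stated properties of the representative $\mathscr F_\8$: measurability holds because $M_\8$ is the almost sure limit of the measurable functionals $\mathscr F_T(\dB)=M_T$; positivity is the defining feature of the weak-disorder regime; and $\E[\mathscr F_\8(\dB)]=\E[M_\8]=1$ follows because $(M_T)_{T\ge0}$ is a nonnegative martingale with $M_0=1$ that is uniformly integrable for $\beta<\beta_0$ (being $L^2$-bounded there), so that $M_T\to M_\8$ in $L^1$.

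Because all the work is in the Theorem, there is no genuine analytic obstacle in the Corollary. The only points requiring a line of care are the justification of Slutsky's lemma (which rests on $Y_\e\to1$ in probability) and the selection of a single measurable representative $\mathscr F_\8$ valid simultaneously for every $\e$; the latter is precisely why one fixes $\mathscr F_\8$ once and for all as a functional on path space before invoking the scaling relation $u_{\e,t}(0)=\mathscr F_{\e^{-2}t}(\dB^{\ssup{\e,\e^{-2}t}})$.
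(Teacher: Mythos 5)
Your proposal is correct and matches the paper's own argument, which consists precisely of the scaling identity $\mathscr F_\8(\dB^{\ssup{\e,T}})/u_{\e,t}(0)\eqlaw M_\8/M_T$ established in the discussion preceding the corollary, the conversion $T^{\frac{d-2}4}=\e^{-\frac{d-2}2}t^{\frac{d-2}4}$ for $T=\e^{-2}t$, and a direct appeal to Theorem~\ref{th:tclMT}. The only cosmetic difference is that for the reciprocal statement the paper simply invokes the ``Moreover'' clause of Theorem~\ref{th:tclMT} (the version normalized by $M_\8$), whereas you re-derive it via Slutsky's lemma and $Y_\e\to 1$ in probability, which is an equally valid one-line substitute; your added verification of the properties of $\mathscr F_\8$ (measurable representative of the a.s.\ limit, positivity in weak disorder, $\E[M_\8]=1$ by $L^2$-boundedness and uniform integrability) fills in details the paper leaves implicit.
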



While we do not discuss it in detail, Theorem \ref{th:tclMT} and Corollary \ref{coro:final} provide {\it{Edwards-Wilkinson}} type limit as $T\to\infty$ and $\e\to 0$ respectively. 
We mention two recent articles (\cite{GRZ17}, \cite{MU17}) where a similar problem has been studied 
in a different context. It was shown \cite[Theorem 1.2]{GRZ17} that, if $\beta>0$ is chosen sufficiently small, then for $f\in \mathcal C_c^\infty(\rd)$, 
\begin{equation}\label{eq1-GRZ}
\e^{1-\frac d2} \int_{\rd}\dd x \,\big[ u_\e(t,x)- \mathbb E(u_{\e}(t,x))\big] \, f(x) \Rightarrow \int_{\rd} \dd x\, \mathscr U(t,x) \,\, f(x)
\end{equation}
where $\mathscr U$ solves the heat equation with additive noise, or the Edwards-Wilkinson equation: 
\begin{equation}\label{eq2-GRZ}
\partial_t \mathscr U=\frac 12 \Delta \mathscr U+ \beta \sigma^2(\beta) \,\overline u \, \dB, \qquad \mathscr U(0,x)= 0,
\end{equation}
and $\overline u$ is the solution of the heat equation \eqref{eq-heat}. We remark that the nature of the results in \eqref{eq1-GRZ} and in Theorem \ref{th:tclMT},
as well as their proofs are different. In particular, in the present case we consider pointwise fluctuations of the form $M_T(x) - M_\infty(x)$ for $x\in \rd$ (i.e., we do not study the spatially smoothened averages of $M_T(x)-\mathbb E[M_T(x)]$).

The case when the noise $\dB$ is smoothened both in time and space has also recently been considered. If $F(t,x)= \int_{\rd}\int_0^\infty \phi_1(t-s) \phi_2(x-y) \dd B(s,y)$ is the mollified noise, and $\hat u_\e(t,x)=u(\e^{-2}t,\e^{-1}x)$ with $u$ solving $\partial_t u=\frac 12 \Delta u + \beta \, F(t,x) u$, then it was shown in \cite{M17} that for any $\beta>0$ and $x\in \rd$, $\frac 1 {\kappa(\e,t)}\,\, \mathbb E[\hat u_\e(t,x)] \to \hat u(t,x)$ as $\e\to 0$, where $\kappa(\e,t)$ is a divergent constant and $\hat u(t,x)$ solves the homogenized heat equation 
\begin{equation}\label{eq-heat-homog}
\partial_t \hat u= \frac 12 \mathrm{div}\big(\mathrm{a_\beta} \nabla \hat u\big)
\end{equation}
with diffusion coefficient $\mathrm a_\beta\in \mathbb R^{d\times d}$. It was then shown in \cite[Theorem 1.1]{GRZ17} 
that, for $\beta>0$ small enough, a result of the form \eqref{eq1-GRZ} holds also  for the rescaled and spatially averaged fluctuations $\e^{1-d/2}\int \dd x f(x)[\hat u_\e(t,x)-\mathbb E(\hat u_\e(t,x)]$,  
and the limit $\mathscr U$ again satisfies the additive noise stochastic heat equation $\partial_t \mathscr U=  \frac 12 \mathrm{div}\big(\mathrm{a_\beta} \nabla \mathscr U\big) + \beta \nu^2(\beta) \,\hat u \, \dB$ with diffusivity $\mathrm a_\beta$ and variance $\nu^2(\beta)$, and $\hat u$ solves \eqref{eq-heat-homog}. Note that, unlike \eqref{eq2-GRZ}, due to the presence of time correlations, in this case both the diffusion matrix and the variance of the noise are homogenized in the limit $\e\to 0$.

Finally we briefly comment on the strategy for the proof of Theorem \ref{th:tclMT} for which we loosely follow \cite{CL17} as a guiding philosophy.
The first step relies on a technical fact stated in Proposition \ref{prop:claim1} whose proof constitutes Section \ref{sec:prclaim1}.  
However, a key step for the proof of Theorem \ref{th:tclMT} is utterly disparate from \cite{CL17}. In particular, we do not take the approach via central limit theorem for martingales or use stable and mixing convergence (see \cite{HL15}) as in \cite{CL17} which can conceivably be adapted to the present case. Instead, we invoke techniques from stochastic calculus as in \cite{CN95} which are well-suited and efficient in the present scenario. The details can be found in Section \ref{sec-proof-theorem}.

 

\section{Proof of Theorem \ref{th:tclMT}.}\label{sec-proof-theorem}

\subsection{Rate of decorrelation.}
In this section we will provide the following elementary result, which provides an estimate on the asymptotic decorrelation of $u_\e(x)$ and $u_\e(y)$ as $\e\to 0$. This estimate also underlines the fact that smoothing $u_\e(x)$ w.r.t. 
any $f\in \mathcal C_c^\infty(\rd)$ makes $\int_{\rd} \dd x \, u_\e(x) \, f(x)$ deterministic (recall \eqref{eq-MSZ}). 
\begin{proposition}\label{prop:covar}  Let $d\geq 3$ and $\beta$ small enough.
\begin{itemize}
\item We have: 
\begin{equation}\label{eq:covM}
{\rm Cov}\big( M_\8(0), M_\8(x)\big)=
\begin{cases}
 E_{x/\sqrt{2}} \bigg[ \mathrm e^{\b^2 \int_0^\8 V(\sqrt 2 W_s) ds} -1 \bigg] \quad \forall x\in \rd, \\
\fC_1  \Big(\frac{1}{|x|}\Big)^{d-2}    \ \,\qquad\qquad\qquad\qquad\forall\, |x| \geq 1,
\end{cases}
\end{equation}
with $\fC_1= E_{\eu/\sqrt 2}\bigg[ \mathrm e^{\b^2 \int_0^\8 V(\sqrt 2 W_s) ds} -1 \bigg]$. 
\item Finally,
\begin{eqnarray} 
\big\|M_\8-M_T\big\|_2^2  \label{eq:asdifnorm}
&\sim& \fC_1 \fC_2 \E\big[M_\8^2\big] T^{-\frac{d-2}{2}}
 \qquad {\rm as} \; T \to \8.
\end{eqnarray}
with $\fC_2= E\big[ \big({\sqrt 2}/|Z|\big)^{d-2} \big]  $, where $Z$ is a centered Gaussian vector with covariance $I_d$.
\end{itemize}
\end{proposition}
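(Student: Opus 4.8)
The plan is to integrate out the white noise first, reducing both assertions to the analysis of an exponential functional of a single Brownian motion.

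\textbf{The second-moment identity.} First I would couple two independent Brownian motions $W^1,W^2$ under $E_0\otimes E_x$, independent of $\dB$, so that $M_T(0)M_T(x)=E_0\otimes E_x[\exp\{\dB(g)-\b^2 TV(0)\}]$ with $g(s,y)=\b\,\un_{[0,T]}(s)[\phi(W^1_s-y)+\phi(W^2_s-y)]$. Taking $\E$ and using the Gaussian identity $\E[e^{\dB(g)}]=e^{\|g\|_2^2/2}$, together with $\int\phi(a-y)\phi(b-y)\dd y=V(a-b)$ and $V(0)=\int\phi^2$, makes the two diagonal terms cancel the normalizer $e^{-\b^2 TV(0)}$ and leaves
\begin{equation}\nn
\E[M_T(0)M_T(x)]=E_0\otimes E_x\Big[\exp\Big(\b^2\int_0^T V(W^1_s-W^2_s)\,\dd s\Big)\Big].
\end{equation}
The difference $W^1-W^2$ is a Brownian motion with generator $\D$ started at $-x$, i.e. $W^1_s-W^2_s\eqlaw\sqrt2\,\widehat W_s$ with $\widehat W$ standard from $x/\sqrt2$ (using that $V$ is radial), so the right-hand side equals $E_{x/\sqrt2}[\exp(\b^2\int_0^T V(\sqrt2 W_s)\dd s)]$. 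Deep in weak disorder the integrand is monotone in $T$ and its expectation stays uniformly bounded; letting $T\to\8$ and subtracting $\E[M_\8(0)]\,\E[M_\8(x)]=1$ gives the first line of \eqref{eq:covM}.

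\textbf{The far field.} Write $F(z)=E_z[e^{\b^2\int_0^\8 V(\sqrt2 W_s)\dd s}-1]$, so the first line reads $\mathrm{Cov}(M_\8(0),M_\8(x))=F(x/\sqrt2)$. Since $V$ is supported in $B(0,1)$, the functional inside $F$ vanishes until $W$ reaches $\bar B(0,1/\sqrt2)$; letting $\tau$ be the hitting time of that ball and applying the strong Markov property at $\tau$, the rotational invariance of $V(\sqrt2\cdot)$ and of Brownian motion makes $F$ constant on the sphere $\{|z|=1/\sqrt2\}$, whence $F(z)=\fC_1\,P_z(\tau<\8)$ for $|z|\ge1/\sqrt2$ with $\fC_1=F(\eu/\sqrt2)$. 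For $d\ge3$ Brownian motion is transient and $P_z(\tau<\8)=\big(1/(\sqrt2\,|z|)\big)^{d-2}$; at $z=x/\sqrt2$ this is $|x|^{-(d-2)}$, which is the second line of \eqref{eq:covM}.

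\textbf{Norm asymptotics.} For the norm, $M_T$ is a mean-one $L^2$-martingale closing at $M_\8$, so orthogonality of increments gives $\|M_\8-M_T\|_2^2=\E[M_\8^2]-\E[M_T^2]$. Taking $x=0$ above and letting $D$ be the generator-$\D$ diffusion from $0$, we have $\E[M_T^2]=E_0[e^{\b^2\int_0^T V(D_s)\dd s}]$ and $\E[M_\8^2]=E_0[e^{\b^2\int_0^\8 V(D_s)\dd s}]$. The Markov property at time $T$ then gives the \emph{exact} identity
\begin{equation}\nn
\E[M_\8^2]-\E[M_T^2]=E_0\Big[e^{\b^2\int_0^T V(D_s)\dd s}\,G(D_T)\Big],\qquad G(w)=E_w\big[e^{\b^2\int_0^\8 V(D_s)\dd s}-1\big],
\end{equation}
where $G(w)=\mathrm{Cov}(M_\8(0),M_\8(w))$ is precisely the kernel from the first two steps, so $G(w)=\fC_1\,|w|^{-(d-2)}$ for $|w|\ge1$. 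The weight depends on the path only through its excursions inside $B(0,1)$, an a.s. finite initial window, whereas $D_T\sim N(0,2T\,I_d)$ sits at the diffusive scale $|D_T|\asymp\sqrt T$. Decoupling these scales — replacing the weight by its $L^1$-limit (whose mass is $\E[M_\8^2]$) and $G$ by its far field — reduces the right-hand side, to leading order, to $\E[M_\8^2]\,\fC_1\,E_0[|D_T|^{-(d-2)}]$; the remaining Gaussian heat-kernel integral is of order $T^{-(d-2)/2}$ with prefactor the Gaussian moment $\fC_2$, which gives \eqref{eq:asdifnorm}.

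\textbf{Main obstacle.} The one nonroutine ingredient is this decoupling, which is a local-limit statement for the Feynman–Kac weighted kernel $q_T(0,z)\,\dd z=E_0[e^{\b^2\int_0^T V(D_s)\dd s};\,D_T\in\dd z]$: after the rescaling $z=\sqrt{2T}\,\zeta$ one must show that $q_T$ converges to $\E[M_\8^2]$ times the standard Gaussian density, with enough uniform integrability to integrate against the far-field weight $|\zeta|^{-(d-2)}$ (integrable at the origin) and to discard the region $|z|=O(1)$, whose contribution is only $O(T^{-d/2})$. I expect to obtain this via a last-exit decomposition at the support ball $\bar B(0,1)$, after which the path runs freely to $D_T$ and the Gaussian local central limit theorem applies; verifying that the near-origin part and the large-deviation tails are negligible relative to $T^{-(d-2)/2}$ is where the bulk of the work lies.
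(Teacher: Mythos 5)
Your handling of both covariance statements is correct and essentially identical to the paper's: the Gaussian computation reducing $\E[M_T(0)M_T(x)]$ to $E_{x/\sqrt 2}\big[\exp\big(\b^2\int_0^T V(\sqrt 2 W_s)\dd s\big)\big]$ is exactly \eqref{eq:expgauss} with $n=2$ plus Brownian scaling, and your far-field argument (the functional vanishes before the hitting time of the support ball, then strong Markov, rotational symmetry, and the transience formula $P_z(\tau<\8)=(r/|z|)^{d-2}$) is precisely what the paper compresses into ``considering the hitting time of the unit ball for $\sqrt 2 W$''. For \eqref{eq:asdifnorm} you also arrive at the same exact identity as the paper: your route via martingale orthogonality $\|M_\8-M_T\|_2^2=\E[M_\8^2]-\E[M_T^2]$ and the Markov property at time $T$ is equivalent to the paper's conditioning of the two-path expansion on $\mathcal F_T$, with the kernel $G(w)={\rm Cov}(M_\8(0),M_\8(w))$ appearing in both. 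The proofs genuinely diverge only at the final decoupling. The paper closes it softly: $\big(\int_0^T V(\sqrt 2 W_s)\dd s,\,T^{-1/2}W_T\big)$ converges in law to an independent pair $\big(\int_0^\8 V(\sqrt 2 W_s)\dd s,\,Z\big)$, and the uniform integrability \eqref{eq-ui} (H\"older plus a Khas'minskii-type bound) both justifies passing to the limit in expectation and absorbs the singularity of $|\cdot|^{-(d-2)}$ at the origin, i.e.\ the region $|D_T|=O(1)$. Your proposed route --- a local limit theorem for the Feynman--Kac weighted kernel via a last-exit decomposition --- would presumably work, but it is strictly stronger than needed, and it is exactly the part you leave unproven (``where the bulk of the work lies''). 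In fact your own sketch already contains every ingredient of the soft argument: the weight converges a.s.\ and monotonically (as $V\geq 0$), the $(1+\delta)$-moment bound you would need is precisely \eqref{eq-ui}, and since $|\zeta|^{-(d-2)}$ is continuous away from $0$, weak convergence plus this uniform integrability yields \eqref{eq:asymptoticindependence1} with no local CLT at all. So there is no gap in substance, only an unnecessarily hard detour announced at the last step.

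One bookkeeping point you should not have glossed over: your final expression is $\E[M_\8^2]\,\fC_1\,E_0\big[|D_T|^{-(d-2)}\big]$ with $D_T\sim N(0,2T\,I_d)$, whose prefactor is $E\big[(\sqrt 2\,|Z|)^{-(d-2)}\big]=2^{-\frac{d-2}{2}}E\big[|Z|^{-(d-2)}\big]$; this is \emph{not} the stated $\fC_2=E\big[(\sqrt 2/|Z|)^{d-2}\big]=2^{\frac{d-2}{2}}E\big[|Z|^{-(d-2)}\big]$, so your closing claim that the prefactor ``is $\fC_2$'' does not follow from your own display. Your arithmetic actually looks right: applying the exact far field $\fC_1|y_1-y_2|^{-(d-2)}$ from \eqref{eq:covM} to the separation $|W_T^{\ssup 1}-W_T^{\ssup 2}|\eqlaw\sqrt{2T}\,|Z|$ gives your constant, whereas the paper's chain inserts $\big(2/|W_T^{\ssup 1}-W_T^{\ssup 2}|\big)^{d-2}$ at the corresponding step, which is inconsistent with \eqref{eq:covM} by exactly the same factor $2^{d-2}$. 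In other words, the mismatch appears to be a slip in the paper's displayed constant rather than an error in your scheme --- but a careful write-up should have recomputed and flagged the discrepancy instead of silently asserting agreement with \eqref{eq:asdifnorm}.
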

\begin{proof}
For any Brownian path $W=(W_s)_{s\geq 0}$ we set 
\begin{equation} \label{eq-Phi}
\Phi_T(W)= \exp\left\{\b \int_0^T \int_{\rd} \phi(W_s-y) \dB(s,y) \dd s\, \dd y - \frac{\b^2T}{2 }V(0)\right\}
\end{equation}
and see that, for any $n\in \mathbb N$, 
\begin{equation} \label{eq:expgauss}
\E \bigg[   \prod_{i=1}^n \Phi_T( W^{\ssup i} ) \bigg] =
\exp \bigg\{ \b^2 \int_0^{T} \sum_{1 \leq i < j \leq n} V\big( W_s^{\ssup i}- W_s^{\ssup j} \big) ds  \bigg\}.
\end{equation}
By Markov property, for $0<S\leq \8$,
\begin{equation}\label{eq:markov}
M_{T+S}(x) = E_x\big[ \Phi_T(W) \,\,
M_S \circ \tht_{T,W_T}
\big]\;.
\end{equation}
where for any $t>0$ and $x\in \rd$, $\theta_{t,x}$ denotes the canonical spatio-temporal shift in the white noise environment. Then, ${\rm Var}\big( M_T(x)\big)= E_0\big[ \mathrm e^{\b^2 \int_0^T V(\sqrt 2 W_s) ds} -1 \big]$ and the first line of equation \eqref{eq:covM} follows from \eqref{eq:markov} and \eqref{eq:expgauss} with $n=2$ and Brownian scaling. 
Now, the second line of  \eqref{eq:covM}  follows by considering the hitting time of the unit ball for $\sqrt{2}W$ and spherical symmetry of $V$.

We now show \eqref{eq:asdifnorm} as follows. For two independent paths $W^{\ssup 1}$ and $W^{\ssup 2}$  (which are also independent of the noise $\dB$), we will denote by $\mathcal F_T$ the $\sigma$-algebra generated by 
both paths until time $T$. Then, by \eqref{eq:markov}, 
\begin{eqnarray*}
\|M_\8-M_T\|_2^2  &=& \E \bigg[E_0^{\otimes 2}\bigg\{ \Phi_T(W^{\ssup 1}) \Phi_T(W^{\ssup 2})  \left(M_\8 \circ \theta_{T,W_T^{\ssup 1}}-1\right)\left(M_\8  \circ \theta_{T,W_T^{\ssup 2}}-1\right)  \bigg\}\bigg]\\
 &=& E_0^{\otimes 2}\bigg[ \mathrm e^{\b^2 \int_0^{T} V( W_s^{\ssup 1}-W_s^{\ssup 2}) \dd s} \times {\rm Cov}\big(M_\8(W_T^{\ssup 1}),M_\8(W_T^{\ssup 2})\big)  \bigg]\\
&=& E_0^{\otimes 2}\bigg[ \mathrm e^{\b^2 \int_0^{T} V( W_s^{\ssup 1}-W_s^{\ssup 2}) \dd s} \times  E_0^{\otimes 2}\bigg[  {\rm Cov}\big(M_\8(W_T^{\ssup 1}),M_\8(W_T^{\ssup 2})\big)  \bigg \vert \mathcal F_T\bigg]\bigg] \\
&\sim& \fC_1  E_0^{\otimes 2}\left[ \mathrm e^{\b^2 \int_0^{T} V( W_s^{\ssup 1}-W_s^{\ssup 2}) \dd s} \times \left(\frac{ 2}{|W_T^{\ssup 1}-W_T^{\ssup 2}|}\right)^{d-2} \right]  \qquad\ ({\rm by\ }\eqref{eq:covM}) \\
&=& \fC_1  
 E_0\left[ \mathrm e^{\b^2 \int_0^T V(\sqrt 2 W_s) \dd s}  \bigg(\frac{\sqrt 2}{|W_T|}\bigg)^{d-2} \right] 
\end{eqnarray*}
Then \eqref{eq:asdifnorm} is proved once we show 
\begin{equation} \label{eq:asymptoticindependence1}
 E_0\bigg[ \mathrm e^{\b^2 \int_0^T V(\sqrt 2 W_s) \dd s}  \bigg(\frac{1}{|W_T|}\bigg)^{d-2} \bigg]
 \sim E_0\bigg[ \mathrm e^{\b^2 \int_0^{\8} V(\sqrt 2 W_s) \dd s} \bigg]  E_0\bigg[ \left(\frac{1}{|W_T|}\right)^{d-2} \bigg] 
\end{equation}
But as $T\to\infty$, 
\begin{equation} \nn
\left( \int_0^{T} V(\sqrt 2 W_s) \dd s, T^{-1/2}W_T \right)  \stackrel{\rm law}{\longrightarrow} \left( \int_0^{\8} V(\sqrt 2 W_s) \dd s, Z \right) 
\end{equation}
with $Z\sim N(0,I_d)$ being independent of the Brownian path $W$, and then \eqref{eq:asdifnorm} follows from the requisite uniform integrability 
\begin{equation}\label{eq-ui}
\sup_{T \geq 1}  E_0\left[\left(\mathrm e^{\b^2 \int_0^T V(\sqrt 2 W_s) \dd s}  \left(\frac{T^{1/2}}{|W_T|}\right)^{d-2} \right)^{1+\delta}\right] < \8 
\end{equation}
for $\delta>0$. By H\"older's inequality and Brownian scaling, for any $p,q\geq 1$ with $1/p+1/q=1$, 
$$
\begin{aligned}
\mbox{(l. h. s.) of \eqref{eq-ui}} &\leq E_0\bigg[\mathrm e^{q(1+\delta)\b^2 \int_0^T V(\sqrt 2 W_s) \dd s}\bigg]^{1/q}\,\, E_0\bigg[ \frac 1 {|W_1|^{p (1+\delta) (d-2)}} \bigg]^{1/p} \\
&\leq E_0\bigg[\mathrm e^{q(1+\delta)\b^2 \int_0^\infty V(\sqrt 2 W_s) \dd s}\bigg]^{1/q} \,\, \bigg[\int_{\mathbb R^d} \dd x \,\, \frac 1 {|x|^{p(1+\delta)(d-2)}} \mathrm e^{-|x|^2/2}\bigg]^{1/p} \\
&\leq C \bigg[\int_0^\infty \dd r  \,\, r^{d-1}\,\, \frac 1 {r^{p(1+\delta)(d-2)}} \mathrm e^{-r^2/2}\bigg]^{1/p} 
\end{aligned}
$$
Then the last integral is seen to be finite provided we choose $\delta>0$ and $p>1$ small enough so that $1< p(1+\delta) \leq \frac {d}{d-2}$.  
 \end{proof}

\subsection{Proof of Theorem \ref{th:tclMT}.}   

In this section we will prove Theorem \ref{th:tclMT}. We start by computing the stochastic differential and bracket of the martingale $M_T$ defined as follows: 
\begin{eqnarray} \nn
\dd M_T&=& \b E_0 \bigg[ \Phi_T(W)\,\, \int_{\rd} \phi(y-W_T) \dB(T, y) \dd T \, \dd y \bigg] \;,\\ \nn
\dd \langle  M\rangle_T &=& \b^2 E_0^{\otimes 2} \bigg[ \Phi_T(W^{\ssup 1})\Phi_T(W^{\ssup 2}) V\big(W_T^{\ssup 1}-W_T^{\ssup 2}\big)  \bigg]  \dd T \\ \label{eq:bracketM}
&=&  \b^2 M_T^{2} \times E_{0,\b,T}^{\otimes 2}  \left[ V(W_T^{\ssup 1}-W_T^{\ssup 2})  \right]  \dd T \;,
\end{eqnarray}
where $E_{0,\b,T}^{\otimes 2}$ is the expectation taken with respect to the product of two independent polymer measures, 
$$
P_{0,\beta,T}(\dd W^{\ssup i})=\frac 1 {Z_{\beta,T}} \,\, \exp\bigg\{ \beta \int_0^T \int_{\rd} \phi(W_s^{\ssup i}-y) \,\,\dB(y, s) \dd s \dd y \bigg\} \,\, P(\dd W^{\ssup i})\qquad i=1,2,
$$
with $ {Z_{\beta,T}}= e^{-\frac{\beta^2}{2}TV(0)}M_T $.
The proof of Theorem \ref{th:tclMT} splits into two main steps. The first step involves showing the following estimate whose proof consititues Section \ref{sec:prclaim1}: 
 \begin{proposition}\label{prop:claim1} There exists $\b_0\in(0,\infty)$, such that for all $\b < \b_0$, as $T \to \8$,
\begin{equation}\nn
T^{\frac{d}{2}} \left(\frac{\dd}{\dd t} \langle  M \rangle\right)_T -  \fC_3 M_T^2 \stackrel{L^2}{\longrightarrow} 0,  
\end{equation}
with  $\fC_3=\fC_3(\beta)=\frac {d-2}2 \, \sigma^2(\beta)$ and $\sigma^2(\beta)$ from \eqref{def-sigma-beta}.
\end{proposition}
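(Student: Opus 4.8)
The plan is to prove the stated $L^2$ convergence by a direct second-moment computation. Abbreviate $A_T := T^{\frac d2}\big(\frac{\dd}{\dd t}\langle M\rangle\big)_T = \b^2 T^{\frac d2}\, E_0^{\otimes 2}\big[\Phi_T(W^{\ssup1})\Phi_T(W^{\ssup2})\,V(W_T^{\ssup1}-W_T^{\ssup2})\big]$, so that the claim is $\E\big[(A_T-\fC_3 M_T^2)^2\big]\to 0$. Expanding the square gives
\[
\E\big[(A_T-\fC_3 M_T^2)^2\big]=\E[A_T^2]-2\fC_3\,\E[A_T M_T^2]+\fC_3^2\,\E[M_T^4],
\]
and, after integrating out the white noise by the Gaussian identity \eqref{eq:expgauss}, each of the three terms becomes an expectation over four independent Brownian replicas $W^{\ssup1},\dots,W^{\ssup4}$ weighted by $\exp\{\b^2\sum_{1\le i<j\le4}\int_0^T V(W_s^{\ssup i}-W_s^{\ssup j})\dd s\}$, carrying respectively two endpoint factors $V(W_T^{\ssup1}-W_T^{\ssup2})V(W_T^{\ssup3}-W_T^{\ssup4})$, one factor $V(W_T^{\ssup1}-W_T^{\ssup2})$, and none, together with the prefactors $\b^4 T^{d}$, $\b^2 T^{\frac d2}$, and $1$. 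The entire problem thus reduces to the asymptotics of these four-replica functionals as $T\to\8$.

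The target is to show
\[
\E[M_T^4]\to\E[M_\8^4],\qquad \E[A_T M_T^2]\to\fC_3\,\E[M_\8^4],\qquad \E[A_T^2]\to\fC_3^2\,\E[M_\8^4],
\]
so that the three terms telescope to $\fC_3^2\,\E[M_\8^4]\,(1-2+1)=0$. The first limit is the $L^4$-convergence $M_T\to M_\8$, valid deep in weak disorder (here one fixes $\b_0$ small enough that $M_T$ is bounded in $L^4$) by the exponential-moment bounds already used in Proposition \ref{prop:covar}. The other two rest on a single factorization principle. An endpoint factor $V(W_T^{\ssup i}-W_T^{\ssup j})$ forces the difference $W^{\ssup i}-W^{\ssup j}$, a Brownian motion of variance $2s$, to return to the support of $V$ at the terminal time $T$; since $d\ge3$ this is a transient event of probability $\asymp T^{-d/2}$, exactly compensated by the prefactor. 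By transience and the compact support of $V$, the accumulated interaction splits (as in the computation leading to \eqref{eq:asymptoticindependence1} and \eqref{eq:asdifnorm}) into an early part, over which all four replicas interact and which builds the full partition function $E_0^{\otimes4}[\exp\{\b^2\sum_{i<j}\int_0^\8 V\}]=\E[M_\8^4]$, and a near-$T$ part localized around the pinned pair. A heat-kernel-plus-time-reversal computation—the four-replica analogue of the asymptotics in Proposition \ref{prop:covar}—identifies the rescaled near-$T$ contribution of a single pinned pair with $\fC_3$, expressed through $h(y):=E_y[\mathrm e^{\b^2\int_0^\8 V(\sqrt2 W_s)\dd s}]$, and yields $\E[A_T M_T^2]\to\fC_3\,\E[M_\8^4]$.

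For $\E[A_T^2]$ there are two endpoint factors, so two pinnings occur near time $T$. The crucial point is that at the diffusive scale the pairs $\{1,2\}$ and $\{3,4\}$ sit at mutual distance $\asymp\sqrt T$, so the cross-interactions $V(W^{\ssup i}-W^{\ssup j})$ with $i\in\{1,2\}$, $j\in\{3,4\}$ are negligible near $T$, while the configurations in which all four endpoints are simultaneously close carry an extra constraint and are of lower order $T^{-3d/2}$ after rescaling. Hence the two terminal pinnings are asymptotically independent, each contributing a factor $\fC_3$, and sharing the same bulk factor $\E[M_\8^4]$, which gives $\E[A_T^2]\to\fC_3^2\,\E[M_\8^4]$ and closes the telescoping.

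The main obstacle is to make the decoupling rigorous and quantitative. One must control the cross-interactions between each pinned pair and the bulk, and between the two pairs, and must establish the uniform integrability needed to pass the $T^{d/2}$- and $T^{d}$-rescaled integrands to the limit; both are handled by the weak-disorder ($\b$ small) exponential-moment and H\"older estimates in the spirit of \eqref{eq-ui}, combined with the transience of $d\ge3$ Brownian motion that makes $\int_0^\8 V$ integrable and the interaction localized. I would isolate the single-pair terminal pinning asymptotic as a lemma and then bootstrap it to the two-pinning estimate for $\E[A_T^2]$; this last step is the delicate one, since it requires genuine quantitative asymptotic independence of the two terminal pinnings rather than merely a first-moment factorization.
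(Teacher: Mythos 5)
Your proposal is correct, and its mechanics coincide with the paper's: a four-replica second-moment computation, decoupling of the bulk interaction from the terminal pinnings via transience and the cutoffs $m$, $T-m$, bridge Khas'minskii/H\"older uniform integrability (Lemma \ref{lemma2-claim1}), and asymptotic independence of the two terminal pinnings obtained by conditioning at time $T/2$ (Proposition \ref{prop-claim4}). The difference is in how the cancellation is organized. You expand $\E[(A_T-\fC_3 M_T^2)^2]$ into three uncentered terms and prove three exact limits, $\E[M_T^4]\to\E[M_\8^4]$, $\E[A_T M_T^2]\to\fC_3\,\E[M_\8^4]$, $\E[A_T^2]\to\fC_3^2\,\E[M_\8^4]$, which telescope. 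The paper instead keeps the centering inside the replica expectation, writing $L_T=E_0^{\otimes 2}\big[\Phi_T(W^{\ssup 1})\Phi_T(W^{\ssup 2})\big(T^{d/2}V(W_T^{\ssup 1}-W_T^{\ssup 2})-\fC_3\big)\big]$, so that $\E[L_T^2]$ carries the product of two \emph{centered} pinning factors; after the same decoupling (Propositions \ref{prop-claim3} and \ref{prop-claim4}) it factorizes as a bounded bulk term times the square of the centered one-pair quantity, which vanishes by the first-moment step (Proposition \ref{cor-LT}). The payoff of the paper's bookkeeping is that the two-pinning step never requires an exact constant: only factorization plus boundedness (items (i)--(iii) there) is needed, the cancellation being wired into each centered factor, whereas your route demands matching exact constants in three computations with diverging prefactors $T^{d}$, $T^{d/2}$, $1$ --- in particular a genuinely quantitative version of the independence of the two terminal pinnings in $\E[A_T^2]$, exactly the point you flag as delicate, and which you would end up resolving by the same $T/2$-conditioning argument. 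Your side estimates check out (the $T^{-3d/2}$ cost of four simultaneously close endpoints against the $T^{d}$ prefactor; the identification $\fC_3=(2\pi)^{-d/2}\int V(\sqrt 2\,y)\,E_y[\mathrm e^{\b^2\int_0^\8 V(\sqrt 2 W_s)\dd s}]\dd y$ consistent with \eqref{eq:fC_4}; $L^4$-boundedness of $M_T$ for small $\b$ via H\"older over the six pair interactions), so your plan is sound; the paper's centered variant is simply the more economical implementation of the same idea.
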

For the second step, we define a sequence $\{G^{\ssup T}_\tau\}_{\tau\geq 1}$ of stochastic processes on time interval $[1, \8)$, with 
\begin{equation}\label{def:GT}
G^{\ssup T}_\tau=  T^{\frac{d-2}{4}}  \left( \frac{M_{\tau T}}{M_T} -1 \right) \;,\qquad \tau \geq 1.
\end{equation}
Then, for all $T$, $G^{\ssup T}$ is a continuous martingale for the filtration ${\mathcal B}^{(T)}=({\mathcal B}^{(T)}_\tau)_{\tau \geq 1}$, where ${\mathcal B}^{(T)}_\tau$ denotes
the $\sigma$-field generated by the white noise $\dB$ up to time $\tau T$. Then we need the following result, which 
provides convergence at the process level:
\begin{theorem} \label{th:cvGP}
For $\b < \beta_0$, as $T \to \8$, we have convergence
\begin{equation} 
G^{\ssup T} \stackrel{\rm law}{\longrightarrow} G
 \end{equation}
on the space of continuous functions on $[1, \8)$, where $G$ is a mean zero Gaussian process with independent increments  and variance
$$
g(\tau) = \sigma^2(\beta) \,\, [1-\tau^{-\frac{d-2}{2}}].
$$
\end{theorem}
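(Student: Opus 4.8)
The plan is to exploit that, for each fixed $T$, the process $(G^{\ssup T}_\tau)_{\tau\ge1}$ is a continuous $\mathcal B^{\ssup T}$-martingale started at $G^{\ssup T}_1=0$, and to appeal to the central limit theorem for continuous martingales used in \cite{CN95}: if the predictable brackets $\langle G^{\ssup T}\rangle_\tau$ converge in probability, for every $\tau\ge1$, to a deterministic continuous function $g(\tau)$, then $G^{\ssup T}\cvlaw G$, where $G$ is the time-changed Brownian motion with $\langle G\rangle=g$. Since the limiting bracket is deterministic, $G$ automatically has independent increments and Gaussian marginals (via the Dambis--Dubins--Schwarz representation), and no Lindeberg-type jump condition is needed because the $G^{\ssup T}$ are continuous. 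The whole statement thus reduces to identifying the limit of the bracket.

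First I would compute $\langle G^{\ssup T}\rangle$ explicitly. Because $M_T$ is $\mathcal B^{\ssup T}_1$-measurable and only $M_{\tau T}$ carries randomness in $\tau$, the time change $s=\tau T$ together with \eqref{eq:bracketM} gives
\begin{equation}\nn
\frac{\dd}{\dd\tau}\langle G^{\ssup T}\rangle_\tau \;=\; \frac{T^{d/2}}{M_T^{2}}\,\Big(\frac{\dd}{\dd s}\langle M\rangle\Big)_{\tau T}
\;=\; \tau^{-d/2}\,\fC_3\,\frac{M_{\tau T}^{2}}{M_T^{2}} \;+\; \tau^{-d/2}\,\frac{R_{\tau T}}{M_T^{2}},
\end{equation}
where I wrote $T^{d/2}=\tau^{-d/2}(\tau T)^{d/2}$ and used Proposition \ref{prop:claim1} to introduce the remainder $R_s:=s^{d/2}(\frac{\dd}{\dd s}\langle M\rangle)_s-\fC_3 M_s^{2}$, which satisfies $\|R_s\|_2\to0$. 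Integrating in $\tau$ and recalling $\fC_3=\frac{d-2}2\sigma^2(\beta)$ together with $\int_1^\tau u^{-d/2}\dd u=\frac{2}{d-2}(1-\tau^{-(d-2)/2})$, the main term converges to the asserted variance
\begin{equation}\nn
\int_1^\tau u^{-d/2}\,\fC_3\,\dd u \;=\; \sigma^2(\beta)\,[1-\tau^{-(d-2)/2}]\;=\;g(\tau).
\end{equation}

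To make this rigorous I would use that the martingale $M$ is bounded in $L^2$ and that $M_s\to M_\infty$ in $L^2$, whence $M_s^2\to M_\infty^2$ in $L^1$. For the main term, the $L^1$-distance between $\int_1^\tau u^{-d/2}\fC_3 M_{uT}^2\,\dd u$ and $M_\infty^2\,g(\tau)$ is bounded by $\fC_3\,(\sup_{s\ge T}\|M_s^2-M_\infty^2\|_1)\int_1^\infty u^{-d/2}\dd u\to0$, where integrability of $u^{-d/2}$ on $[1,\infty)$ holds precisely because $d\ge3$; dividing by $M_T^2\to M_\infty^2>0$ and using Slutsky yields $g(\tau)$ in probability. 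For the remainder, $\|R_{uT}\|_2\le\sup_{s\ge T}\|R_s\|_2\to0$, so $\big\|\int_1^\tau u^{-d/2}R_{uT}\,\dd u\big\|_1\le(\sup_{s\ge T}\|R_s\|_2)\int_1^\infty u^{-d/2}\dd u\to0$, and again dividing by $M_T^2$ gives a term vanishing in probability. This produces $\langle G^{\ssup T}\rangle_\tau\stackrel{\mathbb P}{\longrightarrow}g(\tau)$ for every $\tau$.

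The step I expect to cost the most care is the \emph{uniform-in-time} control on the non-compact interval $[1,\infty)$: one must verify that both the remainder and the replacement $M_{uT}^2/M_T^2\to1$ are controlled not merely for fixed $\tau$ but uniformly as $\tau\to\infty$, so that $\langle G^{\ssup T}\rangle_\infty\to\sigma^2(\beta)$ and tightness of $G^{\ssup T}$ on $C([1,\infty))$ follows. This is exactly where the integrability of $u^{-d/2}$ (hence the hypothesis $d\ge3$) and the $L^2$-boundedness of $M$ in the weak-disorder regime enter, guaranteeing that the tails $\int_\tau^\infty u^{-d/2}\,\dd u$ are uniformly small and that the limiting bracket $g$ is bounded with small increments near infinity. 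Once the bracket convergence and the resulting tightness are secured, the continuous-martingale functional central limit theorem of \cite{CN95} delivers $G^{\ssup T}\cvlaw G$ with the stated independent-increment Gaussian structure and variance $g$.
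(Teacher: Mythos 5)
Your proposal is correct and follows essentially the same route as the paper: both reduce the theorem to convergence in probability of the brackets $\langle G^{\ssup T}\rangle_\tau$ to the deterministic function $g(\tau)$ via Proposition \ref{prop:claim1} and the explicit integral $\fC_3\int_1^\tau \sigma^{-d/2}\dd\sigma = g(\tau)$, then invoke the functional CLT for continuous martingales with deterministic limiting bracket (the paper cites \cite[Theorem 3.11, Chapter 8]{JS87} where you cite \cite{CN95}, but it is the same result). The only cosmetic difference is that you control the random denominator $M_T^{2}$ by $L^1$-bounds plus Slutsky, while the paper localizes on the event $A_\varepsilon$ where $M$ and $M^{-1}$ are uniformly bounded; both hinge identically on $\|M_s^2-M_\infty^2\|_1\to 0$, the a.s.\ positivity of $M_\infty$, and the integrability of $\sigma^{-d/2}$ on $[1,\infty)$ for $d\geq 3$.
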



\noindent{\bf{Proof of Theorem \ref{th:tclMT} (Assuming  Theorem \ref{th:cvGP}):}} 
We write 
\begin{eqnarray*}
T^{\frac{d-2}{4}}  \left( \frac{M_\8}{M_T} -1 \right)  &=& G^{\ssup T}(\8) \\&=& G^{\ssup T}(\tau) +\frac{ T^{\frac{d-2}{4}} [M_\8-M_{\tau T}] }{M_T} \;,
\end{eqnarray*}
and we consider the last term. By \eqref{eq:asdifnorm}, the numerator has $L^2$-norm tending to 0 as $\tau \to \8$ uniformly in $T \geq 1$ whereas the denominator has a positive limit.
Then, the last term vanishes in the double limit $T \to \8, \tau \to \8$, and therefore 
$$
\lim_{T \to \8} T^{\frac{d-2}{4}}  \left( \frac{M_\8}{M_T} -1 \right) = \lim_{\tau \to \8} \lim_{T \to \8} G^{\ssup T}(\tau) ,
$$
which is the Gaussian law with variance $g(\8)\!=\sigma^2(\beta)$ by Theorem \ref{th:cvGP}. Theorem \ref{th:tclMT} is proved. 
\qed
\noindent We now complete the 

\noindent{\bf{Proof of Theorem  \ref{th:cvGP} (Assuming Proposition \ref{prop:claim1}):}}  From the definition \eqref{def:GT} we compute the bracket 
of the square-integrable martingale $G^{\ssup T}$,
$$
\begin{aligned}
\langle G^{\ssup T}\rangle_\tau = \frac{T^\frac{d-2}{2}}{M_T^{2}}\,\, \langle M \rangle_{\tau T}&= \frac{T^\frac{d-2}{2}}{M_T^2} \int_1^{\tau T} 
\left(\frac{\dd}{\dd t} \langle  M \rangle\right)_{s} \dd s
\\  
&= \frac{T^\frac{d}{2}}{M_T^2} {\displaystyle  \int_1^\tau }
\left(\frac{\dd}{\dd t} \langle  M \rangle\right)_{\sigma T} \dd \sigma 
\end{aligned}
$$
by replacing the variables $s=\sigma T$. Then,
$$
\begin{aligned} 
\langle G^{\ssup T}\rangle_\tau -g(\tau)
& = {\displaystyle  \int_1^\tau }
\bigg[  \frac{(\sigma T)^\frac{d}{2}}{M_T^2}
\left(\frac{\dd}{\dd t} \langle  M \rangle\right)_{\sigma T} 
- \fC_3\bigg]
\sigma^{-d/2} \dd \sigma 
\\ 
&= {\displaystyle  \int_1^\tau }
\frac{M_{\sigma T}^2}{M_{ T}^2}
\left[  \frac{(\sigma T)^\frac{d}{2}}{M_{\sigma T}^2}
\left(\frac{\dd}{\dd t} \langle  M \rangle\right)_{\sigma T} 
- \fC_3\right]
\sigma^{-d/2} \dd \sigma 
+
 \frac{\fC_3}{M_T^2} 
{\displaystyle  \int_1^\tau }
\left[ M_{\sigma T}^{2}\!-\!{M_T^2}\right]
\sigma^{-d/2} d \sigma 
\\
& =: I_1 + I_2
\end{aligned}
$$
As $T \to \8$ the last integral vanishes in $L^2$ and $I_2$ vanishes in probability. 
For $\varepsilon \in (0,1]$, introduce the event 
$$
A_\varepsilon= \bigg\{ \sup\big\{M_t; t\in[0,\infty]\big\}\vee\sup\big\{M_t^{-1}; t\in[0,\infty]\big\} \leq \varepsilon^{-1}\bigg\}
$$
and observe that $\lim_{\varepsilon \to 0} \IP( A_\varepsilon) =1$ since $M_t$ is continuous, positive with a positive limit.  So, we can estimate the expectation of $I_1$ by
$$
\E \big[ {\bf 1}_{ A_\varepsilon}  \vert I_1 \vert
 \big]
\leq \frac{\tau}{\varepsilon^6} \,\, \sup_{ t \geq T}\bigg\{ \bigg \| 
t^{\frac{d}{2}} \left(\frac{\dd}{\dd t} \langle  M \rangle\right)_t \!-\!  \fC_3 M_t^2 
\bigg\|_1 \bigg\}  ,
$$
which vanishes by Proposition \ref{prop:claim1}. Thus, $\langle G^{\ssup T} \rangle \to g$ in probability. 
Since for the sequence of continuous martingales $G^{\ssup T}$  the brackets converge pointwise 
to a deterministic limit $g$, we derive that the sequence  $G^{\ssup T}$ itself converges in law to a Brownian motion with time-change given by $g$, that is, the process $G$ defined in the statement of Theorem \ref{th:cvGP} (see \cite[Theorem 3.11 in  Chapter 8]{JS87}), which is proved now.
\qed

\section{Proof of proposition \ref{prop:claim1}} \label{sec:prclaim1}

Denote for short by $L_T$ the quantity of interest,
\begin{eqnarray}
L_T &:=& T^{\frac{d}{2}} \left(\frac{\dd}{\dd t} \langle  M \rangle\right)_T -  \fC_3 M_T^2\nn \\ \nn
&=&  E_{0}^{\otimes 2}  \bigg[\Phi_T(W^{\ssup 1}) \Phi_T(W^{\ssup 2}) \bigg(T^{\frac{d}{2}}  V\big(W_T^{\ssup 1}\!-\! W_T^{\ssup 2}\big) - \fC_3 \bigg)\bigg],
\end{eqnarray}
and proceed in two steps.

\subsection{The first moment.}\label{sec:warmup}
We first want to show that

\begin{proposition}\label{cor-LT}
There exists $\b_1\in(0,\infty)$ such that for all $\b< \b_1$, if we choose
\begin{equation} \label{eq:fC_4}
\fC_3= (2\pi)^{-d/2} \int E_y \left[  \mathrm e^{\b^2\int_0^{\8} V(\sqrt 2  W_t)\dd t}\right]
V(\sqrt 2 y)  \dd y \;, 
\end{equation}
then $ \E(L_T)\to 0$ as $T\to\infty$.
\end{proposition}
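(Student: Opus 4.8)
The plan is to integrate out the white noise first. Applying $\E$ to $L_T$ and using the Gaussian moment identity \eqref{eq:expgauss} with $n=2$, together with Brownian scaling (the difference $W^{\ssup1}-W^{\ssup2}$ of two independent Brownian motions is distributed as $\sqrt2\,W$ for a single standard $W$), I would write
\begin{equation}\nn
\E(L_T)=E_0\Big[\mathrm e^{\beta^2\int_0^T V(\sqrt2 W_s)\dd s}\big(T^{d/2}V(\sqrt2 W_T)-\fC_3\big)\Big]=:A_T-B_T,
\end{equation}
with $A_T=T^{d/2}E_0\big[\mathrm e^{\beta^2\int_0^T V(\sqrt2 W_s)\dd s}V(\sqrt2 W_T)\big]$ and $B_T=\fC_3\,E_0\big[\mathrm e^{\beta^2\int_0^T V(\sqrt2 W_s)\dd s}\big]$. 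Set $\psi(y):=E_y\big[\mathrm e^{\beta^2\int_0^\infty V(\sqrt2 W_s)\dd s}\big]$, which is finite and locally bounded once $\beta$ is small, by transience of $W$ in $d\ge3$ (so $\int_0^\infty V(\sqrt2 W_s)\dd s<\infty$ a.s.) and the exponential moment bound underlying \eqref{eq-ui}. The term $B_T$ is immediate: since $T\mapsto\int_0^T V(\sqrt2 W_s)\dd s$ is nondecreasing, monotone convergence gives $E_0[\mathrm e^{\beta^2\int_0^T V(\sqrt2 W_s)\dd s}]\uparrow\psi(0)$, hence $B_T\to\fC_3\,\psi(0)$. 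It thus remains to show $A_T\to\fC_3\,\psi(0)$, which by \eqref{eq:fC_4} would force $\E(L_T)\to0$.

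For $A_T$ I would disintegrate on the endpoint through the heat kernel $p_T(0,z)=(2\pi T)^{-d/2}\mathrm e^{-|z|^2/2T}$,
\begin{equation}\nn
A_T=\int_{\rd}\dd z\,V(\sqrt2 z)\;\big(T^{d/2}p_T(0,z)\big)\;E^{\ssup T}_{0\to z}\big[\mathrm e^{\beta^2\int_0^T V(\sqrt2 W_s)\dd s}\big],
\end{equation}
where $E^{\ssup T}_{0\to z}$ denotes the Brownian bridge from $0$ to $z$ on $[0,T]$. Since $V(\sqrt2\,\cdot)$ has bounded support, the local CLT gives $T^{d/2}p_T(0,z)\to(2\pi)^{-d/2}$ uniformly there, and the heart of the matter is the bridge decoupling
\begin{equation}\nn
E^{\ssup T}_{0\to z}\big[\mathrm e^{\beta^2\int_0^T V(\sqrt2 W_s)\dd s}\big]\ \xrightarrow[T\to\infty]{}\ \psi(0)\,\psi(z).
\end{equation}
Granting this, dominated convergence in $z$ yields $A_T\to(2\pi)^{-d/2}\psi(0)\int V(\sqrt2 z)\psi(z)\dd z=\fC_3\,\psi(0)$ by \eqref{eq:fC_4}, matching the limit of $B_T$; the single factor $\psi(0)$ is precisely what cancels between $A_T$ and $B_T$.

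To establish the decoupling I would exploit transience and separation of scales. Fixing a window length $R$, I split $\int_0^T=\int_0^R+\int_R^{T-R}+\int_{T-R}^T$. On the fixed window $[0,R]$ the bridge is absolutely continuous with respect to free Brownian motion from $0$, with density $p_{T-R}(W_R,z)/p_T(0,z)\to1$ by the local CLT, so the near-$0$ contribution converges to the free functional on $[0,R]$ and, as $R\to\infty$, to $\psi(0)$; by time reversal of the bridge the near-$T$ contribution symmetrically produces $\psi(z)$. The middle term $\int_R^{T-R}V(\sqrt2 W_s)\dd s$ vanishes because along the bridge the path stays at scale $\sqrt T$ off the support of $V$ throughout $[R,T-R]$, which also forces the two ends, separated by a macroscopic time gap, to become asymptotically independent, yielding the product $\psi(0)\psi(z)$. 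I expect the main obstacle to be carrying out these three limits simultaneously and justifying the interchange of $\lim_T$, $\lim_R$, and the expectations: this rests on uniform integrability of the exponential functionals (in $T$ and in $z$ over the compact support of $V$), which I would obtain by a H\"older argument exactly as in \eqref{eq-ui}, using finiteness of $E_0[\mathrm e^{q\beta^2\int_0^\infty V(\sqrt2 W_s)\dd s}]$ for some $q>1$ when $\beta$ is small. This smallness is precisely what sets the threshold $\beta_1$.
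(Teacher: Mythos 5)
Your proposal is correct and follows essentially the same route as the paper: after integrating out the noise and disintegrating on the endpoint with the local CLT, the paper proves exactly your bridge decoupling $E^{\ssup T}_{0\to z}\big[\mathrm e^{\beta^2\int_0^T V(\sqrt 2 W_s)\dd s}\big]\to\psi(0)\psi(z)$ in Propositions \ref{prop-claim1} and \ref{prop-claim2}, killing the middle window by transience and justifying all interchanges by a uniform-in-$(T,x,z)$ Khas'minskii bound for the bridge (Lemmas \ref{lemma1-claim1}--\ref{lemma3-claim1}), the only cosmetic difference being its single window $m(T)\to\infty$, $m(T)=o(T)$ in place of your fixed $R$ followed by $R\to\infty$. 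One small imprecision: your parenthetical reason for the middle window (``the path stays at scale $\sqrt T$ off the support of $V$ throughout $[R,T-R]$'') is not literally true near the window edges, where the bridge is only at scale $\sqrt R$; the correct justification, which your uniform-integrability fallback in fact supplies, is the occupation-time bound $P_0\big[\int_R^\infty V(\sqrt 2 W_s)\dd s>a\big]\le c/(a\sqrt R)$ transferred to the bridge via the Girsanov density and combined with H\"older, as in the paper's Lemmas \ref{lemma2.5-claim1} and \ref{lemma3-claim1}.
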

\qed

The rest of Section \ref{sec:warmup} is devoted to the proof of Proposition \ref{cor-LT}. For any $t>s\geq 0$ and $x,y\in\rd$, we will denote by $P_{s,x}^{t,y}$ the law (and by 
$E_{s,x}^{t,y}$ the corresponding expectation) of the Brownian bridge starting at $x$ at time $s$ and conditioned to reach $y$ at time $t>s$. 
We will also write 
$$
\rho(t,x)= (2\pi t)^{-d/2} \mathrm e^{-|x|^2/2t}
$$
to be the standard Gaussian kernel. 

We note that 
\begin{eqnarray}\nn
\E (L_T) &=& E_{0}^{\otimes 2}  \bigg[ 
\mathrm e^{\b^2\int_0^T V(W^{\ssup 1}_t\!-\!W^{\ssup 2}_t)\dd t} \bigg( T^{\frac{d}{2}}  V\big(W_T^{\ssup 1}\!-\! W_T^{\ssup 2}\big) - \fC_3 \bigg)\bigg]
\\ \nn
&=&  E_{0}  \bigg[
\mathrm e^{\b^2\int_0^{T} V(\sqrt 2  W_t)\dd t} \bigg( T^{\frac{d}{2}}  V(\sqrt 2  W_T) - \fC_3 \bigg)\bigg]
\end{eqnarray}
and
\begin{equation}\nn
E_{0}  \bigg[  \mathrm e^{\b^2\int_0^{T} V(\sqrt 2  W_t)\dd t}T^{\frac{d}{2}}  V(\sqrt 2  W_T) \bigg] =
\int_{\rd}  V(\sqrt 2 y) E_{0,0}^{T,y}  \bigg[  \mathrm e^{\b^2\int_0^{T} V(\sqrt 2  W_t)\dd t} \bigg] T^{\frac{d}{2}}  \rho(T,y) \dd y.
\end{equation}
{Now, we fix a parameter $m=m(T)$, such that $m\to \infty$ and $m=o(T)$ as $T\to\infty$, which helps us prove Proposition \ref{cor-LT} in two steps:}
\begin{proposition}\label{prop-claim1}
For small enough $\b$, we have for any $y\in \rd$ and as $T\to\infty$,
\begin{equation} \nn
E_{0,0}^{T,y}   \bigg[ 
\mathrm e^{\b^2\int_0^{T} V(\sqrt 2  W_t)\dd t} \bigg] = \mathcal T_1 + o(1),
\end{equation}
where
$$\mathcal T_1=
E_{0,0}^{T,y}   \left[ 
\mathrm e^{\b^2\int_{[0,m] \cup [T\!-\!m,T]} V(\sqrt 2  W_t)\dd t} \right]. 
$$
\end{proposition}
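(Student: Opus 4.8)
The plan is to show that the occupation of the support of $V(\sqrt2\,\cdot)$ during the \emph{bulk} window $[m,T-m]$ is asymptotically negligible, the whole integral being carried by the two end windows where the bridge is pinned near $0$ and near $y$. Since $V\geq 0$, set $A=\b^2\int_{[0,m]\cup[T-m,T]}V(\sqrt2 W_t)\dd t$ (so that $\mathcal T_1=E_{0,0}^{T,y}[\mathrm e^{A}]$) and $B=\b^2\int_{[m,T-m]}V(\sqrt2 W_t)\dd t\geq 0$. The difference to control is
$$
E_{0,0}^{T,y}\big[\mathrm e^{A+B}\big]-E_{0,0}^{T,y}\big[\mathrm e^{A}\big]=E_{0,0}^{T,y}\big[\mathrm e^{A}\big(\mathrm e^{B}-1\big)\big]\geq 0 .
$$
Using $\mathrm e^{B}-1\leq B\,\mathrm e^{B}$ together with Cauchy--Schwarz, I would bound this by
$$
\b^2\,E_{0,0}^{T,y}\Big[\mathrm e^{2\b^2\int_0^T V(\sqrt2 W_t)\dd t}\Big]^{1/2}\,E_{0,0}^{T,y}\Big[\Big(\int_{[m,T-m]}V(\sqrt2 W_t)\dd t\Big)^2\Big]^{1/2},
$$
so it suffices to prove that the first factor stays bounded as $T\to\8$ (for $\b$ small) while the second factor tends to $0$.

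For the exponential-moment factor I would use that, writing $q^{c}_T$ for the Feynman--Kac kernel of $\tfrac12\Delta+c\,V(\sqrt2\,\cdot)$, one has $E_{0,0}^{T,y}[\mathrm e^{c\int_0^T V(\sqrt2 W_t)\dd t}]=q^{c}_T(0,y)/\rho(T,y)$. Because $d\geq 3$ the process is transient and its Green's function $\int_0^\8\rho(u,z)\dd u=c_d|z|^{-(d-2)}$ is finite; hence for $c=2\b^2$ small the potential is a small perturbation and $q^c_T\leq C\,\rho(T,\cdot)$ uniformly in $T$ (a Khasminskii / Kato-class heat-kernel estimate). This yields the required uniform bound on the first factor for $\b$ small.

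The gain in $m$ comes from the second factor. Expanding the square and inserting the explicit bridge two-point density,
$$
E_{0,0}^{T,y}\big[V(\sqrt2 W_s)V(\sqrt2 W_t)\big]=\iint V(\sqrt2 a)V(\sqrt2 b)\,\frac{\rho(s,a)\,\rho(t-s,b-a)\,\rho(T-t,y-b)}{\rho(T,y)}\,\dd a\,\dd b ,
$$
for $s\leq t$ in $[m,T-m]$ and $y$ fixed, I would bound the outer factors on the support of $V(\sqrt2\,\cdot)$ by $\rho(s,a)\lesssim s^{-d/2}$, $\rho(T-t,y-b)\lesssim(T-t)^{-d/2}$ and $\rho(T,y)^{-1}\lesssim T^{d/2}$, while the middle factor $\rho(t-s,b-a)$ integrates against $V(\sqrt2 a)V(\sqrt2 b)$ to a quantity controlled by the finite Green's function. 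Performing the $t$-integral first leaves
$$
E_{0,0}^{T,y}\Big[\Big(\int_{[m,T-m]}V(\sqrt2 W_t)\dd t\Big)^2\Big]\;\lesssim\;T^{d/2}\int_m^{T-m}s^{-d/2}(T-s)^{-d/2}\,\dd s\;\lesssim\; m^{-(d-2)/2},
$$
which vanishes as $T\to\8$ because $m=m(T)\to\8$ and $d\geq 3$.

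The main obstacle is the uniform-in-$T$ exponential-moment bound of the second paragraph: unlike free Brownian motion, the bridge transition density is not dominated by the free one, so one must verify that pinning the endpoint at $y$ does not inflate the returns to the support of $V$ in the bulk. This is exactly where smallness of $\b$ and transience ($d\geq 3$) are used, and it is the only step that is not a routine computation; the remaining estimates are elementary given the finiteness of the Green's function.
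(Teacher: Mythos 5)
Your proof is correct and reaches the stated conclusion, but it takes a partly different route from the paper's, and the differences are worth recording. Both arguments rest on the same pivotal input, namely the uniform-in-$(T,y)$ exponential moment bound for the bridge, $\sup_{z,x,t}E_{0,x}^{t,z}[\mathrm e^{c\int_0^t V(\sqrt 2 W_s)\dd s}]<\infty$ for small $c$: the paper proves this (Lemma \ref{lemma2-claim1}) by a Girsanov tilt to a drifted Brownian motion with density ratio at most $2^{d/2}$ on $[0,t/2]$, Khas'minskii's lemma uniformly over drifts, and time reversal, whereas your Feynman--Kac kernel comparison $q_T^c\leq C\,\rho(T,\cdot)$ is an equivalent formulation, provable by the Duhamel perturbation series together with finiteness of the Green's function in $d\geq 3$; so you correctly isolated the one non-routine step, merely with a different justification. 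Where you genuinely diverge is the bulk term: the paper truncates on the event $\{\int_m^{T-m}V(\sqrt 2 W_s)\dd s>a\}$, kills it by H\"older together with Lemma \ref{lemma2-claim1} and the uniform smallness of its bridge probability (Lemmas \ref{lemma2.5-claim1} and \ref{lemma3-claim1}), and sends $a\to 0$ after $T\to\infty$ --- a qualitative argument --- while you use $\mathrm e^B-1\leq B\mathrm e^B$ and Cauchy--Schwarz to reduce everything to the second moment of the bulk occupation time, computed from the explicit bridge two-point density; this buys a quantitative error rate $O(m^{-(d-2)/4})$, strictly more than the paper's $o(1)$, at the harmless cost of requiring the exponent $2\b^2$ in the moment bound. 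Two small points to tighten: first, in your double time integral the factor $(T-t)^{-d/2}$ depends on $t$, so you cannot literally ``perform the $t$-integral first'' on $\rho(t-s,b-a)$ alone; split into $t\leq (s+T)/2$, where $(T-t)^{-d/2}\leq 2^{d/2}(T-s)^{-d/2}$ and the $t$-integral of $\rho(t-s,b-a)$ is dominated by the Green's function, and the symmetric region treated by integrating in $s$ first --- after which your displayed bound $T^{d/2}\int_m^{T-m}s^{-d/2}(T-s)^{-d/2}\dd s\lesssim m^{-(d-2)/2}$ is indeed correct since $\int_m^\infty s^{-d/2}\dd s=\frac{2}{d-2}m^{-(d-2)/2}$. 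Second, your bound $\rho(T,y)^{-1}\lesssim T^{d/2}$ carries a constant $\mathrm e^{|y|^2/2}$ (for $T\geq 1$), so your estimate is locally uniform in $y$ rather than uniform; since the proposition is stated for fixed $y$, and its downstream use integrates $y$ against the compactly supported $V(\sqrt 2 y)$, this is immaterial.
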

\noindent 

\begin{proposition}\label{prop-claim2}
For small enough $\b$, we have as $T\to\infty$, 
\begin{eqnarray} \nn
\cT_1
&\sim &
E_{0,0}^{T,y}   \bigg[ 
\mathrm e^{\b^2\int_{[0,m] } V(\sqrt 2  W_t)\dd t} \bigg]
E_{0,0}^{T,y}   \bigg[ 
\mathrm e^{\b^2\int_{ [T\!-\!m,T]} V(\sqrt 2  W_t)\dd t} \bigg]\\ \nn
&\to&
E_{0}  \bigg[ \mathrm e^{\b^2\int_0^{\8} V(\sqrt 2  W_t)\dd t} \bigg]  E_{y}   \bigg[ 
\mathrm e^{\b^2\int_{0}^\8 V(\sqrt 2  W_t)\dd t}\bigg].
\end{eqnarray}
\end{proposition}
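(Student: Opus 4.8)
The plan is to prove the two assertions in the statement separately: first the asymptotic factorization of $\cT_1$ (the relation ``$\sim$''), and then the convergence of each of the two bridge factors to the corresponding free Brownian expectation (the relation ``$\to$''). The guiding principle is that the two functionals $\int_{[0,m]}V(\sqrt2 W_t)\dd t$ and $\int_{[T-m,T]}V(\sqrt2 W_t)\dd t$ depend on disjoint stretches of the path, which decorrelate because they are separated by the long middle stretch of length $T-2m\sim T$, while each stretch has length $m\to\infty$, long enough for the (transient) excursion near the support of $V$ to stabilize.

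For the factorization I would use the Markov property of the bridge at the times $m$ and $T-m$: conditionally on $(W_m,W_{T-m})$ the initial and terminal segments are independent, so
\[
\cT_1 = E_{0,0}^{T,y}\big[ h_1(W_m)\, h_2(W_{T-m})\big],\qquad
\text{where } h_1(a)=E_{0,0}^{m,a}\big[e^{\b^2\int_0^m V(\sqrt2 W_t)\dd t}\big],\ h_2(b)=E_{T-m,b}^{T,y}\big[e^{\b^2\int_{T-m}^T V(\sqrt2 W_t)\dd t}\big],
\]
while the two factors on the right-hand side of the proposition are precisely $E_{0,0}^{T,y}[h_1(W_m)]$ and $E_{0,0}^{T,y}[h_2(W_{T-m})]$. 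Writing these out with the explicit bridge kernels, the factorization reduces to comparing the joint kernel $\rho(m,a)\rho(T-2m,b-a)\rho(m,y-b)/\rho(T,y)$ with the product of the two marginal kernels; their ratio is $\rho(T-2m,b-a)\rho(T,y)/[\rho(T-m,y-a)\rho(T-m,b)]$, which tends to $1$ for the relevant range $|a|,|b|=O(\sqrt m)=o(\sqrt T)$ (both kernels being asymptotic to $(2\pi T)^{-d}$). Equivalently, under the bridge $\mathrm{Cov}(W_m,W_{T-m})$ is of order $m^2/T$ while each marginal variance is of order $m$, so the normalized correlation is of order $m/T\to0$ and the jointly Gaussian pair $(W_m,W_{T-m})$ becomes asymptotically independent; combined with the integrability of $h_1,h_2$ this yields the claimed product asymptotics by dominated convergence.

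For the convergence of each factor, consider the first one. I would express the bridge expectation as a free Brownian expectation weighted by the Radon--Nikodym derivative $R_T=\rho(T-m,\,y-W_m)/\rho(T,y)$,
\[
E_{0,0}^{T,y}\big[e^{\b^2\int_0^m V(\sqrt2 W_t)\dd t}\big]=E_0\big[e^{\b^2\int_0^m V(\sqrt2 W_t)\dd t}\,R_T\big].
\]
Since $m=o(T)$ one has $R_T\le (T/(T-m))^{d/2}e^{|y|^2/(2T)}\le C$ uniformly and $R_T\to1$ almost surely (because $|W_m|=O(\sqrt m)=o(\sqrt T)$), so a H\"older split gives $E_0[e^{\b^2\int_0^m V}(R_T-1)]\to0$, using that $e^{\b^2\int_0^m V}\le e^{\b^2\int_0^\infty V}$ is bounded in $L^p$ for small $\b$. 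It then remains to observe that $E_0[e^{\b^2\int_0^m V(\sqrt2 W_t)\dd t}]\to E_0[e^{\b^2\int_0^\infty V(\sqrt2 W_t)\dd t}]$ by dominated convergence, the limit being finite since $\int_0^\infty V(\sqrt2 W_t)\dd t<\infty$ almost surely by transience of $W$ in $d\ge3$. For the second factor I would reverse time: the time-reversal of the bridge from $0$ to $y$ over $[0,T]$ is a bridge from $y$ to $0$, under which $\int_{T-m}^T V(\sqrt2 W_t)\dd t$ becomes $\int_0^m V(\sqrt2 W_t)\dd t$ for a path started at $y$, so the identical argument delivers the limit $E_y[e^{\b^2\int_0^\infty V(\sqrt2 W_t)\dd t}]$.

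The main obstacle I anticipate is the uniform control required to run all these limits simultaneously as $T\to\infty$ with $m=m(T)\to\infty$: keeping the Radon--Nikodym derivatives uniformly bounded, keeping $h_1,h_2$ bounded uniformly in the relevant parameters, and securing the uniform integrability of the exponential functionals needed for the dominated-convergence and H\"older steps. All of these hinge on the finiteness of $E_x[e^{q\b^2\int_0^\infty V(\sqrt2 W_t)\dd t}]$ for some $q>1$, which is exactly where the smallness of $\b$ enters (and which also underlies the finiteness of $\sigma^2(\b)$ and $\fC_1$). Once this integrability is in hand, the endpoint decorrelation and the convergence $R_T\to1$ are comparatively soft.
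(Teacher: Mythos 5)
Your proposal is correct, but it takes a genuinely different route from the paper's. The paper conditions the bridge at the single midpoint $T/2$ and time-reverses the second half, so that $\cT_1$ becomes an integral over the rescaled midpoint $z\sqrt{T}$ of a product of two \emph{forward} bridge expectations $E_{0,0}^{T/2,z\sqrt{T}}[\cdots]\,E_{0,y}^{T/2,z\sqrt{T}}[\cdots]$; a single convergence statement (their \eqref{eq:BBToBM}, proved by exactly your Radon--Nikodym comparison, i.e.\ Lemma \ref{lemma1-claim1}) shows each factor tends to the corresponding free expectation \emph{independently of the midpoint} $z$, and dominated convergence in $z$, with domination supplied by the bridge Khas'minskii bound of Lemma \ref{lemma2-claim1}, then delivers the limit directly (the intermediate ``$\sim$'' follows since each full-bridge factor converges to the same free limits by the same argument). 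You instead cut at the two natural times $m$ and $T-m$, where the conditional independence of the two weighted segments given $(W_m,W_{T-m})$ is exact, and you prove the factorization by a quantitative Gaussian decorrelation: under the bridge, $\mathrm{Cov}(W_m,W_{T-m})=m^2/T$ per coordinate against marginal variances of order $m$, so the correlation is $m/(T-m)\to 0$. Your route buys a transparent, quantitative explanation of where the decoupling comes from; the paper's midpoint-plus-reversal trick avoids the two-point density comparison altogether. Two points to tighten in your write-up, neither a gap in substance: (a) the decorrelation step needs $h_1,h_2$ bounded \emph{uniformly} in the endpoint and in $m$, and the naive application of Lemma \ref{lemma1-claim1} at time $m/2$ only gives $2^{d/2}e^{|a|^2/(2m)}$, which blows up for $|a|\gg\sqrt{m}$ --- what you actually need is the Girsanov-with-drift argument of Lemma \ref{lemma2-claim1} (this is precisely the ``hinge'' you flag, so invoke that lemma rather than only the free-motion moment $E_x[e^{q\b^2\int_0^\8 V(\sqrt 2 W_t)\dd t}]$); (b) the almost sure convergence $R_T\to 1$ via $|W_m|=O(\sqrt{m})$ requires, by the law of the iterated logarithm, the slightly stronger condition $m\log\log m=o(T)$ (harmless, since $m$ is yours to choose), or else convergence in probability combined with your uniform bound on $R_T$, which suffices --- the paper's own proof of \eqref{eq:BBToBM} glosses over the identical point.
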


\medskip

We will provide some auxiliary results which will be needed to prove Proposition \ref{prop-claim1} and Proposition \ref{prop-claim2}.
First, we state a simple consequence of Girsanov's theorem:
\begin{lemma}\label{lemma1-claim1}
For any $s<t$ and $y,z\in \rd$, the Brownian bridge $P_{0,y}^{t,z}$ is absolutely continuous w.r.t. $P_{0,y}$ on the $\sigma$-field $\mathcal F_{[0,s]}$ generated by the Brownian path until time $s<t$, and 
\begin{equation} \label{lemma1-claim1-densityBBBM}
\begin{aligned}
\frac{ \dd P_{0,y}^{t,z}}{\dd P_{0,y}}\bigg|_{\mathcal F_{[0,s]}} &=\ \frac{\rho(t-s,z-W_s)}{\rho(t,z-y)}
&\leq \bigg(\frac t {t-s}\bigg)^{d/2} \exp\bigg\{\frac {|z-y|^2}{2t}\bigg\}.
\end{aligned}
\end{equation}
\end{lemma}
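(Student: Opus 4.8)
The plan is to realise the Brownian bridge as a Doob $h$-transform of Brownian motion, which makes the Radon--Nikodym derivative explicit and the bound immediate. The key observation is that the bridge density on $\mathcal F_{[0,s]}$ should be exactly the terminal heat kernel evaluated at the current position, normalised by its value at the starting point.

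First I would verify that the process $N_s := \rho(t-s, z - W_s)$, for $s\in[0,t)$, is a positive $P_{0,y}$-martingale. The Gaussian kernel $(s,x)\mapsto \rho(t-s,z-x)$ satisfies the backward heat equation $\partial_s u + \tfrac12\D u = 0$ on $[0,t)\times\rd$, so Itô's formula applied to $N_s$ shows that its finite-variation part vanishes; together with positivity and integrability on each $[0,s]$ with $s<t$, this makes $N_s$ a genuine martingale with $N_0 = \rho(t,z-y)$.

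Next I would identify the measure $Q$ on $\mathcal F_{[0,s]}$ defined by $\frac{\dd Q}{\dd P_{0,y}}\big|_{\mathcal F_{[0,s]}} = N_s/N_0$ with the bridge $P_{0,y}^{t,z}$ restricted to $[0,s]$. It is enough to match finite-dimensional distributions: for $0<t_1<\cdots<t_n\le s$, reweighting the free transition densities $\rho(t_1,x_1-y)\cdots\rho(t_n-t_{n-1},x_n-x_{n-1})$ by $N_{t_n}/N_0 = \rho(t-t_n,z-x_n)/\rho(t,z-y)$ reproduces exactly the bridge marginals, the terminal factor together with the normalisation $\rho(t,z-y)^{-1}$ encoding the conditioning on $W_t=z$. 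Since cylinder sets generate $\mathcal F_{[0,s]}$, this yields $Q=P_{0,y}^{t,z}$ on $\mathcal F_{[0,s]}$, and hence the claimed identity
$$\frac{\dd P_{0,y}^{t,z}}{\dd P_{0,y}}\bigg|_{\mathcal F_{[0,s]}} = \frac{\rho(t-s,z-W_s)}{\rho(t,z-y)}.$$

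Finally, writing out the two Gaussian kernels gives
$$\frac{\rho(t-s,z-W_s)}{\rho(t,z-y)} = \bigg(\frac{t}{t-s}\bigg)^{d/2}\exp\bigg\{\frac{|z-y|^2}{2t}-\frac{|z-W_s|^2}{2(t-s)}\bigg\},$$
and the stated upper bound follows at once by discarding the nonpositive term $-|z-W_s|^2/2(t-s)$ in the exponent. The argument is entirely standard; the only step needing slight care is the passage from finite-dimensional cylinder functionals to the full $\sigma$-field $\mathcal F_{[0,s]}$, which the martingale/$h$-transform framing handles cleanly, so I do not anticipate a genuine obstacle here.
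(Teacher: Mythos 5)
Your proof is correct. Note that the paper itself offers no proof of this lemma: it is stated as ``a simple consequence of Girsanov's theorem'' and dispatched with an immediate \qed, so your write-up actually supplies more detail than the source. Your route --- realising the bridge as the Doob $h$-transform with $h$-function $\rho(t-s,z-W_s)$, matching finite-dimensional distributions on cylinder sets, and then bounding the Gaussian ratio by discarding the nonpositive exponent $-|z-W_s|^2/2(t-s)$ --- is the standard and fully adequate argument; the only genuinely load-bearing step is the identification of the reweighted measure with the bridge, and your cylinder-set computation handles it. One simplification you could make: the martingale property of $N_s=\rho(t-s,z-W_s)$ does not need It\^o and the backward heat equation at all, since the Markov property together with Chapman--Kolmogorov gives directly
\begin{equation*}
E_{0,y}\big[N_s \,\big|\, \mathcal F_{[0,r]}\big] = \int_{\rd} \rho(s-r,x-W_r)\,\rho(t-s,z-x)\,\dd x = \rho(t-r,z-W_r)=N_r, \qquad r<s<t,
\end{equation*}
which sidesteps the integrability discussion for the stochastic integral entirely. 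This is also why the paper can afford to cite Girsanov and move on: the density formula is just the Bayes/conditioning identity for the Gaussian kernel, and the drift-change version of Girsanov is reserved in the paper for the genuinely quantitative estimate in the following lemma (the Khas'minskii bound for the bridge), where a change of drift $\alpha=(z-x)/t$ is actually needed.
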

\qed
We will need the following version of Khas'minskii's lemma for the Brownian bridge: 
\begin{lemma}\label{lemma2-claim1}
If $E_0\bigg[2 \beta^2 \int_0^\infty V(\sqrt 2 W_s) \dd s\bigg] <1$, then 
\[
\sup_{z,x\in \mathbb R^d,t>0} E_{0,x}^{t,z}\bigg[\exp\bigg\{\beta^2 \int_0^t V(\sqrt 2 W_s) \dd s\bigg\}\bigg] <\infty.
\]
\end{lemma}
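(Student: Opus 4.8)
The plan is to reduce the bridge exponential moment to a \emph{free} Brownian exponential moment, to which the classical Khas'minskii lemma applies, the loss being exactly the factor $2$ built into the hypothesis. First I would split the time interval at the midpoint, $\int_0^t=\int_0^{t/2}+\int_{t/2}^t$, apply the Cauchy--Schwarz inequality under $P_{0,x}^{t,z}$, and use the time-reversal symmetry of the bridge (the map $s\mapsto t-s$ carries $P_{0,x}^{t,z}$ to $P_{0,z}^{t,x}$). This reduces the claim to a uniform bound on $E_{0,x}^{t,z}[\exp\{2\beta^2\int_0^{t/2}V(\sqrt2 W_s)\,ds\}]$. On $[0,t/2]$ the functional is $\mathcal F_{[0,t/2]}$-measurable, so Lemma~\ref{lemma1-claim1} with $s=t/2$ lets me replace the bridge by free Brownian motion at the cost of the density bound $2^{d/2}e^{|z-x|^2/2t}$, giving $E_{0,x}^{t,z}[\exp\{2\beta^2\int_0^{t/2}V\}]\le 2^{d/2}e^{|z-x|^2/2t}\,E_x[\exp\{2\beta^2\int_0^\infty V(\sqrt2 W_s)\,ds\}]$, where $V\ge0$ was used to extend the integral to $\infty$. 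The classical Khas'minskii bound controls the free moment once $\sup_x E_x[2\beta^2\int_0^\infty V(\sqrt2 W_s)\,ds]<1$; since $V\ge0$ is spherically symmetric, its Newtonian potential $x\mapsto E_x[\int_0^\infty V(\sqrt2 W_s)\,ds]=c_d\int|x-y|^{2-d}V(\sqrt2 y)\,dy$ is radially nonincreasing and hence maximized at $x=0$, so that supremum equals $E_0[\int_0^\infty V(\sqrt2 W_s)\,ds]$ and the stated hypothesis is precisely the required smallness.

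This step is clean when $x=z$ (a loop), for then the offending factor $e^{|z-x|^2/2t}$ disappears and I obtain the uniform loop bound $\sup_{x,t}E_{0,x}^{t,x}[\exp\{\beta^2\int_0^t V(\sqrt2 W_s)\,ds\}]\le 2^{d/2}\big(1-2\beta^2E_0[\int_0^\infty V(\sqrt2 W_s)\,ds]\big)^{-1}=:C_{\mathrm{loop}}<\infty$. For general endpoints I would pass through the Feynman--Kac kernel $q_t(x,z):=\rho(t,z-x)\,E_{0,x}^{t,z}[\exp\{\beta^2\int_0^t V(\sqrt2 W_s)\,ds\}]$; the Chapman--Kolmogorov identity $q_t(x,z)=\int q_{t/2}(x,w)q_{t/2}(w,z)\,dw$ combined with Cauchy--Schwarz yields $q_t(x,z)\le q_t(x,x)^{1/2}q_t(z,z)^{1/2}$, and therefore $E_{0,x}^{t,z}[\exp\{\beta^2\int_0^t V(\sqrt2 W_s)\,ds\}]\le C_{\mathrm{loop}}\,e^{|z-x|^2/2t}$. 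In particular the supremum is finite over the moderate-separation region $|z-x|^2\le t$.

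The hard part is uniformity in the far regime $|z-x|^2>t$, where the factor $e^{|z-x|^2/2t}$ is useless and, the factor $2$ of the hypothesis being already spent, no second Cauchy--Schwarz doubling is available. Here I would instead exploit that $V$ is supported in $B(0,1)$, so the functional is dominated by $\beta^2\|V\|_\infty L$ with $L=\mathrm{Leb}\{s\le t:\ |W_s|\le 1/\sqrt2\}$ the occupation time of a fixed ball. For $|z-x|^2>t$ the bridge is ballistic: the straight-line energy $|z-x|^2/2t$ dominates, the time available near the support is of order $t/|z-x|$, and $L$ is comparable, at the level of exponential moments, to that of a single free transit, which is bounded uniformly under the same gauge condition. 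Making this rigorous---most naturally through the Dyson expansion of $q_t(x,z)$, telescoping the Gaussian factors and invoking the straight-line (energy) lower bound together with $y_i\in B(0,1/\sqrt2)$ to show that routing through the support is exponentially costly precisely when $|z-x|^2>t$---is the technical heart of the argument and the step I expect to be the main obstacle. Combining the moderate- and far-separation estimates gives the uniform bound, proving the lemma. \qed
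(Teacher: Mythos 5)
Your reduction is sound as far as it goes: the midpoint split with Cauchy--Schwarz, the time reversal $s\mapsto t-s$ carrying $P_{0,x}^{t,z}$ to $P_{0,z}^{t,x}$, the loop bound, and the Chapman--Kolmogorov/Cauchy--Schwarz step $q_t(x,z)\le q_t(x,x)^{1/2}q_t(z,z)^{1/2}$ together give uniformity on any region $\{|z-x|^2\le Ct\}$. But the proof has a genuine gap exactly where you flag it: in the far regime $|z-x|^2>t$ you offer only a heuristic (ballistic bridge, Dyson expansion) and no estimate, so the lemma as stated --- a supremum over \emph{all} $x,z\in\R^d$ and $t>0$ --- is not proved. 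This regime cannot be waved away in context: the paper invokes the lemma (in the proof of Proposition~\ref{prop-claim2} and of (i)--(ii) in Section~\ref{sec:proof-claim4}) to dominate bridge expectations with endpoints of the form $z\sqrt T$ over time horizon $T/2$, for which $|z\sqrt T-y|^2/(T/2)\approx 2|z|^2$ is unbounded as $z$ ranges over $\R^d$; the dominated-convergence arguments there need precisely the uniformity you are missing.

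The single idea you lack --- and it is the whole content of the paper's proof --- is to run Girsanov with the \emph{straight-line drift} $\alpha=(z-x)/t$ instead of comparing the bridge to driftless Brownian motion via Lemma~\ref{lemma1-claim1}. On $\mathcal F_{[0,t/2]}$ the density of $P_{0,x}^{t,z}$ with respect to the drifted law $P^{(\alpha)}_x$ is $\rho^{(\alpha)}(t/2,z-W_{t/2})/\rho^{(\alpha)}(t,z-x)$, and with this choice of $\alpha$ the denominator equals $(2\pi t)^{-d/2}$ \emph{exactly}, so the ratio is bounded by $2^{d/2}$ with no factor $\mathrm e^{|z-x|^2/2t}$ at all --- under the tilted measure the endpoint $z$ is typical, which is the rigorous implementation of your ``the bridge only transits the support'' heuristic. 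The price is that Khas'minskii's lemma must be applied to drifted Brownian motion uniformly in $\alpha$, but this is harmless: $E^{(\alpha)}_x\big[\int_0^\infty V(\sqrt2 W_s)\,\dd s\big]=\int_0^\infty\int_{\rd} V(\sqrt 2 y)\,\rho^{(\alpha)}(s,y-x)\,\dd y\,\dd s$ is maximized at $x=0$, $\alpha=0$ (a drift only decreases the occupation of the potential), so the required smallness reduces exactly to the hypothesis $2\beta^2E_0\big[\int_0^\infty V(\sqrt2 W_s)\,\dd s\big]<1$. With this one change your first paragraph already yields the full lemma, and the two-regime case analysis, the Feynman--Kac kernel, and the Dyson expansion all become unnecessary. (A cosmetic difference: the paper splits the exponential by $\mathrm e^{a+b}\le\frac12(\mathrm e^{2a}+\mathrm e^{2b})$ plus time reversibility rather than Cauchy--Schwarz; both spend the same factor $2$, so your hypothesis accounting is correct.)
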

\begin{proof}
By Girsanov's theorem, for any $s<t$, $\alpha\in \R^d$ and $A\in \mathcal F_{[0,s]}$, 
\begin{equation}\label{eq1-lemma2-claim1}
P_{0,x}^{t,z}(A)= E^{(\alpha)}_x \bigg[ \frac{\rho^{(\alpha)}(t-s; z-W_s)}{\rho^{(\alpha)}(t,z-x)} \,\mathbf 1_A\bigg] 
\end{equation}
where $E^{(\alpha)}$ (resp. \unskip \  $P^{(\alpha)}$) refers to the expectation (resp. \unskip \ the probability) with respect to Brownian motion with drift $\alpha$ and transition density 
\[
\rho^{(\alpha)}(t,z)= \frac 1 {(2\pi t)^{d/2}} \exp\bigg\{- \frac{|z- t\alpha|^2}{2t}\bigg\}.
\]
With $\alpha=(z-x)/t$ and $s=t/2$, applying \eqref{eq1-lemma2-claim1}, we get 
\[
P_{0,x}^{t,z}(A) \leq 2^{d/2}\,\, P^{(\alpha)}_x (A).
\]
Replacing $A$ by $e^{2\beta^2 \int_0^{t/2} V(\sqrt 2 W_s) \dd s }$, we have
\[
\begin{aligned}
\sup_{z,x\in \mathbb R^d,t>0} E_{0,x}^{t,z}\bigg[\exp\bigg\{2\beta^2 \int_0^{t/2} V(\sqrt 2 W_s) \dd s\bigg\}\bigg] &\leq 2^{d/2} \sup_\alpha E^{(\alpha)}\bigg[\exp\bigg\{2\beta^2 \int_0^{t/2} V(\sqrt 2 W_s) \dd s\bigg\}\bigg] \\
&\leq 2^{d/2} \frac 1 {1-a} <\infty,
\end{aligned}
\]
where the second upper bound follows from Khas'minskii's lemma provided we have 
\[
2\beta^2 \sup_{x,\alpha} E^{(\alpha)}_x \bigg[ \int_0^\infty V(\sqrt 2 W_s) \dd s \bigg] \leq a <1.
\]
But since the expectation in the above display is equal to $\int_0^\infty \dd s \int_{\R^d} \dd z V(\sqrt 2 z)\,\, \rho^{(\alpha)} (s, z-x)$ and is maximal for $x=0$ and $\alpha=0$, the requisite condition reduces to 
\[
2\beta^2 E_0 \bigg[ \int_0^\infty V(\sqrt 2 W_s) \dd s \bigg]<1,
\]
which is satisfied by our assumption. Finally,  the lemma follows from the observation
 $$
 \exp\bigg\{\beta^2\int_0^t V(\sqrt 2 W_s) \,\dd s\bigg\}\leq \frac 12 \bigg[\exp\bigg\{2\beta^2\int_0^{t/2} V(\sqrt 2 W_s) \dd s \bigg\}+ \exp\bigg\{2\beta^2\int_{t/2}^{t} V(\sqrt 2 W_s) \dd s \bigg\}\bigg]
 $$
  combined with time reversibility of Brownian motion. 
\end{proof}
Recall that $V=\phi\star\phi$ is bounded and has support in a ball of radius $1$ around the origin, and therefore, for some constant $c, c^\prime>0$, and any $a>0$, 
$$
P_0\bigg[\int_m^\infty \dd s \, V(\sqrt 2 W_s) >a\bigg] \leq \frac {c}a \int_m^\infty \frac{\dd s}{s^{3/2}} \int_{B(0,1)} \dd y V(\sqrt 2 y) \exp\bigg\{-\frac{|y|^2}{2s}\bigg\} \leq \frac{c^\prime \|V\|_\infty} {am^{1/2}} \to 0 
$$
as $m\to\infty$, implying
\begin{lemma}\label{lemma2.5-claim1}
For any $a>0$, $\lim_{T\to\infty}\,\, P_0\big[\int_m^\infty \dd s \, V(\sqrt 2 W_s) >a\big] =0$.
\end{lemma}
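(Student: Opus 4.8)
The plan is to establish the result by a first-moment (Markov inequality) estimate, exploiting that $d\geq 3$ makes the tail of the Brownian occupation integral have vanishing mean. First I would apply Markov's inequality to the nonnegative random variable $\int_m^\infty V(\sqrt 2 W_s)\,\dd s$:
\[
P_0\bigg[\int_m^\infty V(\sqrt 2 W_s)\,\dd s > a\bigg] \leq \frac 1a\, E_0\bigg[\int_m^\infty V(\sqrt 2 W_s)\,\dd s\bigg],
\]
so that it suffices to show the expectation on the right vanishes as $m=m(T)\to\infty$ (which it does, since $m\to\infty$ when $T\to\infty$).

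Next I would interchange expectation and time integral by Tonelli (legitimate as $V\geq 0$) and insert the Gaussian transition density of $W$ started at the origin, giving
\[
E_0\bigg[\int_m^\infty V(\sqrt 2 W_s)\,\dd s\bigg] = \int_m^\infty \dd s\int_{\rd} V(\sqrt 2 y)\,\rho(s,y)\,\dd y.
\]
The support hypothesis on $V$ confines the inner integral to the ball $B(0,1)$, and the prefactor $(2\pi s)^{-d/2}$ in $\rho(s,y)$ supplies the decay in $s$. Since $d\geq 3$ and $m\to\infty$ allow us to take $s\geq m\geq 1$, one has $(2\pi s)^{-d/2}\leq c\,s^{-3/2}$; bounding the Gaussian exponential by $1$ and $V$ by $\|V\|_\infty$ then reproduces the chain of inequalities recorded just above the statement, ending in $\int_m^\infty s^{-3/2}\,\dd s = 2 m^{-1/2}$ and hence a bound of order $\|V\|_\infty/(a\,m^{1/2})$.

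Finally, since $m=m(T)\to\infty$ as $T\to\infty$, this bound tends to zero, which proves the lemma. I do not expect any genuine obstacle here: the one structural input is the transience afforded by $d\geq 3$, which guarantees that $\int_0^\infty V(\sqrt 2 W_s)\,\dd s$ has finite mean and that its tail from $m$ onward has mean decaying to zero. The explicit polynomial rate $m^{-1/2}$ is incidental and merely reflects the crude replacement of $s^{-d/2}$ by $s^{-3/2}$; any summability of the kernel at infinity would suffice for the qualitative conclusion.
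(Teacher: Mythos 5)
Your proposal is correct and is essentially identical to the paper's own argument: the display immediately preceding the lemma is exactly this Markov-inequality-plus-first-moment computation, with the Gaussian kernel bounded via $(2\pi s)^{-d/2}\leq c\,s^{-3/2}$ (using $d\geq 3$), the support of $V$ confining the spatial integral to $B(0,1)$, and $\int_m^\infty s^{-3/2}\,\dd s = 2m^{-1/2}\to 0$ since $m=m(T)\to\infty$. No differences worth noting.
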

By Lemma \ref{lemma2-claim1}, we also have
\begin{lemma}\label{lemma3-claim1}
For any $a>0$, 
$$
\lim_{T\to\infty}\sup_{z\in \mathbb R^d} P_{0,0}^{T,z}\bigg[\int_m^{T-m} V(\sqrt 2 W_s) \dd s >a \bigg] =0.
$$
\end{lemma}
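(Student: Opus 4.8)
The plan is to prove the statement by a first-moment (Markov) argument carried out \emph{uniformly} in the endpoint $z$, rather than through the exponential moment directly. Note in passing that the exponential route alone is insufficient: conditioning on $(W_m,W_{T-m})$ and applying Lemma \ref{lemma2-claim1} to the middle bridge only gives $E_{0,0}^{T,z}[e^{\b^2\int_m^{T-m}V(\sqrt 2 W_s)\dd s}]\le K$ uniformly, hence a tail bound of the form $Ke^{-\b^2 a}$ that is uniform in $z$ but does \emph{not} vanish as $T\to\8$. So first I would apply Markov's inequality and Tonelli to reduce the claim to a first-moment estimate,
\begin{equation}\nn
\sup_{z}P_{0,0}^{T,z}\bigg[\int_m^{T-m}V(\sqrt 2 W_s)\dd s > a\bigg] \le \frac 1a \sup_z \int_m^{T-m} E_{0,0}^{T,z}\big[V(\sqrt 2 W_s)\big]\dd s ,
\end{equation}
so that it suffices to show the right-hand side tends to $0$ uniformly in $z$.

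Next I would write the one-time marginal of the bridge explicitly through the heat kernel,
\begin{equation}\nn
E_{0,0}^{T,z}\big[V(\sqrt 2 W_s)\big] = \int_{\rd} V(\sqrt 2 x)\,\frac{\rho(s,x)\,\rho(T-s,z-x)}{\rho(T,z)}\dd x ,
\end{equation}
and bound the integrand on the support $\{|x|\le 1/\sqrt 2\}$ of $V(\sqrt 2\,\cdot)$. Here $\rho(s,x)\le (2\pi s)^{-d/2}$, while for the ratio I would insert the Gaussian exponents and, using $|z-x|^2\ge |z|^2-2|z||x|$, bound the $z$-dependent part of the exponent by $-A|z|^2+B|z|$ with $A=\frac{s}{2T(T-s)}$ and $B=\frac{|x|}{T-s}$. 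Maximizing this downward quadratic over $|z|\ge 0$ gives $B^2/(4A)=|x|^2T/\big(2s(T-s)\big)\le 1/(2m)$ for $s\in[m,T-m]$ and $T\ge 2m$ (since $s(T-s)\ge m(T-m)\ge mT/2$), yielding the key uniform bound
\begin{equation}\nn
\sup_{z\in\rd}\ \frac{\rho(T-s,z-x)}{\rho(T,z)} \le \Big(\frac{T}{T-s}\Big)^{d/2} e^{1/(2m)} \qquad (|x|\le 1/\sqrt 2),
\end{equation}
and hence $\sup_z E_{0,0}^{T,z}[V(\sqrt 2 W_s)]\le C\,(2\pi)^{-d/2}\,T^{d/2}\big(s(T-s)\big)^{-d/2}$ with $C=e^{1/(2m)}\int_{\rd}V(\sqrt 2 x)\dd x<\8$.

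Finally I would integrate this over $s\in[m,T-m]$. Splitting at $T/2$, using $T-s\ge T/2$ on $[m,T/2]$ (and symmetrically on $[T/2,T-m]$) together with the transience bound $\int_m^{\8}s^{-d/2}\dd s=\frac{2}{d-2}m^{1-d/2}$, valid precisely because $d\ge 3$, I obtain
\begin{equation}\nn
T^{d/2}\int_m^{T-m}\big(s(T-s)\big)^{-d/2}\dd s \le \frac{2^{d/2+2}}{d-2}\,m^{1-d/2}.
\end{equation}
Thus $\sup_z \int_m^{T-m} E_{0,0}^{T,z}[V(\sqrt 2 W_s)]\dd s=O(m^{1-d/2})\to 0$ as $m=m(T)\to\8$, which with the first display proves the lemma.

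The main obstacle is exactly the uniformity in the endpoint $z$. When $s$ approaches the right end $T-m$, the Girsanov density comparison of Lemma \ref{lemma1-claim1} degenerates, since the factor $(T/(T-s))^{d/2}e^{|z|^2/2T}$ is neither bounded in $z$ nor integrable up to $s=T$; one therefore cannot simply transfer the Brownian tail estimate of Lemma \ref{lemma2.5-claim1} to the bridge. Working instead with the explicit marginal density and the elementary observation that, on the support of $V$, the $z$-dependence of the ratio is a downward quadratic with vanishingly small maximum is what circumvents this and delivers the uniform bound; the exponential control of Lemma \ref{lemma2-claim1} then enters only as the companion moment bound used when this tail estimate is fed into Proposition \ref{prop-claim1}.
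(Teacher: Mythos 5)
Your proof is correct, but it takes a genuinely different route from the paper's. The paper offers no displayed argument: it asserts the lemma ``by Lemma \ref{lemma2-claim1}'', by which it means the mechanism inside that lemma's \emph{proof} rather than its statement --- namely the Girsanov comparison $P_{0,x}^{t,z}(A)\leq 2^{d/2}\,P_x^{(\alpha)}(A)$ for $A\in\mathcal F_{[0,t/2]}$ with drift $\alpha=(z-x)/t$, combined with time reversal of the bridge to handle the half-interval $[T/2,T-m]$; this reduces the bridge tail to a drifted-Brownian-motion tail, and the first-moment computation displayed just before Lemma \ref{lemma2.5-claim1} is uniform in the drift (the relevant expectation is maximal at $x=0$, $\alpha=0$, as already noted in the proof of Lemma \ref{lemma2-claim1}), giving a bound $O(m^{1-d/2}/a)$ uniform in $z$. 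You instead apply Chebyshev directly under the bridge law and bound the one-time marginal $\rho(s,x)\rho(T-s,z-x)/\rho(T,z)$ by maximizing the $z$-dependent Gaussian exponent on the support of $V(\sqrt 2\,\cdot)$; your algebra is sound: the downward-quadratic maximum $B^2/(4A)=|x|^2T/(2s(T-s))\leq 1/(2m)$ for $s\in[m,T-m]$ and $T\geq 2m$, the resulting uniform bound $\sup_z E_{0,0}^{T,z}[V(\sqrt 2 W_s)]\leq C(2\pi)^{-d/2}T^{d/2}(s(T-s))^{-d/2}$, and the splitting of the $s$-integral at $T/2$ with the transience tail $\int_m^\infty s^{-d/2}\dd s=\frac{2}{d-2}m^{1-d/2}$ all check out, and you land on the same rate $O(m^{1-d/2})$ as the paper's implicit argument. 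Your preliminary diagnostics are also accurate and worth keeping: the statement of Lemma \ref{lemma2-claim1} alone only yields the non-vanishing bound $K\mathrm e^{-\beta^2 a}$, and the density bound of Lemma \ref{lemma1-claim1} indeed degenerates as $s\to T$ and is unbounded in $z$ --- which is precisely why the paper's route goes through the drift comparison plus time reversal rather than Lemma \ref{lemma1-claim1}. In terms of trade-offs, the paper's approach recycles already-established comparisons and is shorter given that machinery, while yours is self-contained, fully explicit, and makes the uniformity in the endpoint $z$ --- the only delicate point of the lemma --- completely transparent.
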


\begin{proof}[{\bf{Proof of Proposition \ref{prop-claim1}}}]
Note that, for any $a>0$, we only need to show that

\[
\limsup_{T\to\infty} \sup_{{y\in\rd}}\,\, E_{0,0}^{T,y}  \bigg[ \mathrm e^{\b^2\int_0^{T} V(\sqrt 2  W_t)\dd t}\,\,\, \mathbf 1\bigg\{\int_m^{T-m} V(\sqrt 2 W_s)\,\dd s >a\bigg\}\bigg]  =0.
\]
But the above convergence follows by H\"older's inequality,  Lemma \ref{lemma2-claim1} and Lemma \ref{lemma3-claim1}.
\end{proof}

We now turn to the proof of 

\begin{proof}[{\bf{Proof of Proposition \ref{prop-claim2}}}]
Condition on the position of the Brownian bridge at time $T/2$, then use reversal property of the Brownian bridge and change of variable $z\to\sqrt{T} z$, to get:
\begin{align*}
&\cT_1 = \int_{\mathbb{R}^d} E_{0,0}^{T/2,z}\left[e^{\b^2\int_{[0,m] } V(\sqrt 2  W_t)\dd t} \right]E_{T/2,z}^{T,y}\left[\mathrm e^{\b^2\int_{[T-m,T] } V(\sqrt 2  W_t)\dd t} \right] \frac{\rho(T/2,z)\rho(T/2,y-z)}{\rho(T,y)}\dd z\\
& = \int_{\mathbb{R}^d} E_{0,0}^{T/2,z\sqrt{T}}\left[e^{\b^2\int_0^m V(\sqrt 2  W_t)\dd t} \right]E_{0,y}^{T/2,z\sqrt{T}}\left[\mathrm e^{\b^2\int_0^m V(\sqrt 2  W_t)\dd t} \right] \frac{\rho(1/2,z)\rho(1/2,z-y/\sqrt{T})}{\rho(1,y/\sqrt{T})}\dd z.
\end{align*}
We now claim that, for fixed $z$,
\begin{equation} \label{eq:BBToBM}
E_{0,y}^{T/2,z\sqrt{T}}\left[\mathrm e^{\b^2\int_0^m V(\sqrt 2  W_t)\dd t} \right] \sim E_{y}\left[\mathrm e^{\b^2\int_0^\infty V(\sqrt 2  W_t)\dd t} \right].
\end{equation}
Then, by dominated convergence theorem applied to the above integral, where the expectations in the integrand are bounded thanks to Lemma \ref{lemma2-claim1}, we obtain that:
\begin{align*}
\cT_1 \sim & \int_{\mathbb{R}^d} E_0\left[\mathrm e^{\b^2\int_0^\infty V(\sqrt 2  W_t)\dd t} \right]E_y\left[\mathrm e^{\b^2\int_0^\infty V(\sqrt 2  W_t)\dd t} \right] \frac{\rho(1/2,z)\rho(1/2,z)}{\rho(1,0)}\dd z\\
& = E_0\left[\mathrm e^{\b^2\int_0^\infty V(\sqrt 2  W_t)\dd t} \right]E_y\left[\mathrm e^{\b^2\int_0^\infty V(\sqrt 2  W_t)\dd t} \right].
\end{align*}
To prove \eqref{eq:BBToBM}, we use Lemma \ref{lemma1-claim1}:
\begin{align*}
&E_{0,y}^{T/2,z\sqrt{T}}\left[\mathrm e^{\b^2\int_0^m V(\sqrt 2  W_t)\dd t} \right] \\
& = \frac{1}{\rho(T/2,z\sqrt{T}-y)} E_{y} \left[\mathrm e^{\b^2\int_0^m V(\sqrt 2  W_t)\dd t} \rho(T/2-m,z\sqrt{T}-\sqrt{2}W_m) \right]\\
& = \frac{1}{\rho(1/2,z-y/\sqrt{T})\left(\pi(1-\frac{2m}{{T}})\right)^{d/2}} E_{y} \left[\mathrm e^{\b^2\int_0^m V(\sqrt 2  W_t)\dd t} \mathrm e^{-\frac{|z-\sqrt{2/T}\,W_m|^2}{1-2m/T}}\right].
\end{align*}
By monotone convergence and the fact that $m=o(T)$, we obtain:
\[P\text{-a.s.}\quad 
\mathrm e^{\b^2\int_0^m V(\sqrt 2  W_t)\dd t} \to \mathrm e^{\b^2\int_0^\infty V(\sqrt 2  W_t)\dd t}\quad \text{and} \quad
\mathrm e^{-\frac{|z-\sqrt{2/T}\,W_m|^2}{1-2m/T}} \to \mathrm e^{-2z^2}.
\]
Then, we have the following uniform integrability property for small $\delta >0$ and small $\b$:
\[
E_y\left[\left(\mathrm e^{\b^2\int_0^m V(\sqrt 2  W_t)\dd t} \mathrm e^{-\frac{|z-\sqrt{2/T}\,W_m|^2}{1-2m/T}}\right)^{1+\delta}\right] \leq E_y\left[\mathrm e^{(1+\delta)\b^2\int_0^\infty V(\sqrt 2  W_t)\dd t}\right]<\infty.
\]
 Hence,
\begin{equation*}
E_{0,y}^{T/2,z\sqrt{T}}\left[\mathrm e^{\b^2\int_0^m V(\sqrt 2  W_t)\dd t} \right] \to \frac{\mathrm e^{-2z^2}}{\rho(1/2,z)\pi^{d/2}} E_{y} \left[\mathrm e^{\b^2\int_0^\infty V(\sqrt 2  W_t)\dd t} \right] = E_{y} \left[\mathrm e^{\b^2\int_0^\infty V(\sqrt 2  W_t)\dd t} \right].
\end{equation*}

\end{proof}

\subsection{Second moment.}\label{sec:second}

The goal of this section is to show 

\begin{proposition}\label{prop-second-moment}
There exists $\b_0\in (0,\infty)$, such that for all $\b< \b_0$, $\E(L_T^2) \to 0$. 
\end{proposition}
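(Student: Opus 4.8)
The plan is to square the convenient decomposition of $L_T$ and reduce $\E[L_T^2]$ to a four-path computation in which an exact cancellation appears at the level of limits. First I would record that, since $E_0^{\otimes 2}[\Phi_T(W^{\ssup1})\Phi_T(W^{\ssup2})]=M_T^2$,
$$L_T=\cR_T-\fC_3\,M_T^2,\qquad \cR_T:=T^{d/2}E_0^{\otimes2}\big[\Phi_T(W^{\ssup1})\Phi_T(W^{\ssup2})\,V(W_T^{\ssup1}-W_T^{\ssup2})\big].$$
Squaring and taking $\E$, the Gaussian identity \eqref{eq:expgauss} with $n=4$ expresses $\E[M_T^4]$, $\E[\cR_TM_T^2]$ and $\E[\cR_T^2]$ as expectations over four independent Brownian motions weighted by $\cE_T:=\exp\{\beta^2\int_0^T\sum_{1\le i<j\le4}V(W^{\ssup i}_s-W^{\ssup j}_s)\,\dd s\}$, so that
$$\E[L_T^2]=\E[\cR_T^2]-2\fC_3\,\E[\cR_TM_T^2]+\fC_3^2\,\E[M_T^4].$$
With $\cK:=E_0^{\otimes4}[\exp\{\beta^2\int_0^\infty\sum_{1\le i<j\le4}V(W^{\ssup i}_s-W^{\ssup j}_s)\,\dd s\}]$, I would prove the three limits $\E[M_T^4]\to\cK$, $\E[\cR_TM_T^2]\to\fC_3\cK$ and $\E[\cR_T^2]\to\fC_3^2\cK$; substituting then gives $\E[L_T^2]\to\fC_3^2\cK\,(1-2+1)=0$.

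The finiteness of $\cK$, and the choice of $\beta_0$, come from H\"older's inequality with exponent $6$ applied to the six factors $\exp\{\beta^2\int_0^\infty V(W^{\ssup i}-W^{\ssup j})\}$: by symmetry this bounds $\cK$ by $E_0[\exp\{6\beta^2\int_0^\infty V(\sqrt2W_s)\dd s\}]$, which is finite once $\beta$ is small enough by Lemma \ref{lemma2-claim1}. Since the four motions are transient in $d\ge3$ the total interaction is a.s.\ finite, and as $V\ge0$ one has $\cE_T$ increasing in $T$ to its $T=\infty$ value; monotone convergence then gives $\E[M_T^4]=E_0^{\otimes4}[\cE_T]\to\cK$.

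For $\E[\cR_TM_T^2]$ and $\E[\cR_T^2]$ I would run the near-$0$/near-$T$ decoupling of Propositions \ref{prop-claim1}--\ref{prop-claim2}, splitting $[0,T]=[0,m]\cup[m,T-m]\cup[T-m,T]$ with $m\to\infty$, $m=o(T)$. Three disjoint-in-time contributions then combine multiplicatively. First, the middle block is negligible: four-path versions of Lemmas \ref{lemma2.5-claim1}--\ref{lemma3-claim1}, obtained from the single-pair ones by H\"older, make $\int_m^{T-m}\sum_{i<j}V$ uniformly small. Second, on $[T-m,T]$ the endpoint weights $V(W_T^{\ssup1}-W_T^{\ssup2})$ (and $V(W_T^{\ssup3}-W_T^{\ssup4})$) pin the relevant pair(s) to within $O(1)$ while forcing the two pairs to mutual distance of order $\sqrt T$; cross-pair interactions near $T$ therefore drop out, and the single-pair computation of Propositions \ref{prop-claim1}--\ref{prop-claim2} applied to each pinned pair produces exactly the constant $\fC_3$ of \eqref{eq:fC_4} per pinned pair (and $1$ for an unconstrained pair). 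Third, on $[0,m]$ all four paths start at the origin and, by Lemma \ref{lemma1-claim1}, the bridge densities tend to $1$, so the near-$0$ block converges to the \emph{full} four-path constant $\cK$ in every one of the three expectations. As these blocks occupy disjoint intervals they factor, yielding $\E[\cR_TM_T^2]\to\fC_3\cdot\cK$ and $\E[\cR_T^2]\to\fC_3^2\cdot\cK$; the common near-$0$ factor $\cK$ is exactly what renders the alternating sum zero.

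I expect the main obstacle to be the doubly-pinned limit $\E[\cR_T^2]\to\fC_3^2\cK$, where one must simultaneously absorb the prefactor $T^{d}$ and decouple the two pinned pairs near time $T$. For the former I would establish a two-bridge, four-path Khas'minskii bound uniform in the bridge endpoints --- the natural extension of Lemma \ref{lemma2-claim1}, again reduced to the single-bridge case by H\"older --- which legitimizes dominated convergence after the change of variables $y_2=y_1+u$, $y_4=y_3+w$ (with $u,w\in\mathrm{supp}\,V$) that turns $T^d\prod_i\rho(T,y_i)$ into an integrable density converging to $(2\pi)^{-d/2}$ per pair. For the latter I would show that, under the four-bridge measure with $|y_1-y_3|\sim\sqrt T$, the probability that the two pairs approach within distance $1$ during $[T-m,T]$ vanishes --- a bridge analogue of the spreading estimate behind Lemma \ref{lemma3-claim1} --- which is precisely what splits the near-$T$ self-interactions into $\fC_3\cdot\fC_3$. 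Verifying these uniform estimates, together with the insensitivity of the near-$0$ block to the endpoint conditioning, is the technical heart of the argument.
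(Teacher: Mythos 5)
Your proposal is correct in substance and rests on the same analytic core as the paper---the splitting $[0,T]=[0,m]\cup[m,T-m]\cup[T-m,T]$ with $m\to\infty$, $m=o(T)$, the uniform bridge Khas'minskii bound of Lemma \ref{lemma2-claim1} (reduced to a single bridge by H\"older), and the negligibility of the middle-interval and cross-pair interactions---but it organizes the cancellation differently. You expand $\E[L_T^2]=\E[\cR_T^2]-2\fC_3\,\E[\cR_T M_T^2]+\fC_3^2\,\E[M_T^4]$ and compute three exact fourth-moment limits, with the cancellation $\fC_3^2\cK(1-2+1)=0$. The paper instead never separates the centered factor: writing $\E(L_T^2)$ as a four-path expectation carrying the product $\prod_{i\in\{1,3\}}\big(T^{d/2}V(W_T^{\ssup i}-W_T^{\ssup{i+1}})-\fC_3\big)$, it reduces (Proposition \ref{prop-claim3}) to $\cT_2=E_0^{\otimes 4}[H_m\,H_{T-m,T}]$ and then conditions on the four paths at time $T/2$ (Proposition \ref{prop-claim4}) to factor $\cT_2$ into the near-$0$ block, which converges to your $\cK=E_0^{\otimes4}[H_\infty]$, times the \emph{square} of the one-pair centered quantity $E_0^{\otimes2}\big[\mathrm e^{\beta^2\int_{T-m}^T V(W^{\ssup 1}_t-W^{\ssup 2}_t)\dd t}\big(T^{d/2}V(W_T^{\ssup 1}-W_T^{\ssup 2})-\fC_3\big)\big]$, which tends to zero by the Section 3.1 computations. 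This buys real economy: because that squared factor vanishes, the near-$0$ block only needs uniform boundedness and convergence in law (items (i)--(iii) there), and the doubly-pinned limit with prefactor $T^{d}$---which you rightly single out as the technical heart of your scheme---never has to be evaluated exactly, since the conditioning at $T/2$ decouples the two pins automatically. Your route, by contrast, requires sharp constants in all three limits (any normalization mismatch, e.g.\ in the $(2\pi)^{-d/2}$ of \eqref{eq:fC_4}, would destroy the $1-2+1$ cancellation) together with a genuinely two-bridge uniform integrability estimate; it is feasible with the tools you name and is conceptually transparent, but carries a heavier verification burden. One small correction: Lemma \ref{lemma1-claim1} does not make the bridge density ratio tend to $1$ pointwise---after the rescaling of the endpoint by $\sqrt T$ it tends to a $z$-dependent limit, and the conditioning washes out only upon integrating over $z$, exactly as in the proof of Proposition \ref{prop-claim2}---so your appeal to the ``insensitivity of the near-$0$ block'' should be routed through that computation rather than through the density bound alone.
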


For the proof of the above result, it is enough to show that $\limsup_{T\to\infty} \mathbb E(L_T^2)\leq 0$. Computing second moment, we get an integral over four independent Brownian paths: 
\begin{eqnarray}\label{eq1-second}
\E(L_T^2) &= E_{0}^{\otimes 4}   \bigg[ \prod_{i\in\{1,3\}} \bigg( T^{\frac{d}{2}}  V(W^{\ssup i}_T\!-\!W^{\ssup{i+1}}_T) - \fC_3 \bigg)  
\mathrm e^{\b^2 \sum_{1\leq i < j \leq 4} \int_0^T V(W^{\ssup i}_t\!-\!W^{\ssup j}_t)\dd t} \bigg] 
\nn \\
&= E_{0}^{\otimes 4} \bigg[  \; \prod_{i\in\{1,3\}} \bigg\{\mathrm e^{\b^2\int_0^T V(W^{\ssup i}_t\!-\!W^{\ssup{i+1}}_t)\dd t}  \bigg(T^{\frac{d}{2}}  V(W^{\ssup i}_T\!-\!W^{\ssup{i+1}}_T) - \fC_3\bigg) \bigg\}
\nn \\
&\qquad\times \mathrm e^{\b^2\sum^{*} \int_0^T V(W^{\ssup i}_t\!-\!W^{\ssup j}_t)\dd t}\; \bigg]
\nn
\end{eqnarray}
where the sum $\sum^{*} $ is considered for $4$ pairs $(i,j), {1\leq i < j \leq 4}$ different from $(1,2)$ and $(3,4)$.
\medskip

Throughout the rest of the article, for notational convenience, we will write 
\begin{equation}\label{eq-H-m}
\begin{aligned}
&H_m= \mathrm e^{\b^2 \sum_{1\leq i < j \leq 4}  \int_0^m V(W^{\ssup i}_t\!-\!W^{\ssup j}_t)\dd t} \, , \\
&H_{T-m,T}=\prod_{i\in\{1,3\}}\bigg\{\mathrm e^{\b^2  \int_{T-m}^T V(W^{(i)}_t\!-\!W^{(i+1)}_t)\dd t}\,\,\bigg(T^{d/2} V\left(W^{(i)}_T\!-\!W^{(i+1)}_T\right) - \fC_4 \bigg)\bigg\}.
\end{aligned}
\end{equation}
We will now estimate each term in the expectation in \eqref{eq1-second}. 
Proposition \ref{prop-claim3} stated below enables us to neglect the contributions
of $\int_m^{T-m} V(W^{\ssup i}_t\!-\!W^{\ssup j}_t)\dd t $ for all $i,j$ 
and of
$\int_{T-m}^{T} V(W^{\ssup i}_t\!-\!W^{\ssup j}_t)\dd t $ for all $(i,j) \neq (1,2), (3,4)$. 
More precisely, we want to show that 
\begin{proposition}\label{prop-claim3}
For $m={m(T)}$ as above, there exists a constant $C>0$ such that, for small enough $\b$,
\[
\limsup_{T\to\infty} \E L_T^2 \leq C \limsup_{T\to\infty}\cT_2,  
\]
where
\begin{equation}\label{eq1-T2}
\begin{aligned}
 \mathcal T_2&=  E_{0}^{\otimes 4}   \big[  H_m \,\, H_{T-m,T}\big].
 \end{aligned}
\end{equation}
\end{proposition}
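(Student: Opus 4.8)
The plan is to isolate, inside the four-path expectation for $\E(L_T^2)$, the contributions already present in $\cT_2$ and to show that everything else is asymptotically negligible. Write $V_i=V(W^{\ssup i}_T-W^{\ssup{i+1}}_T)$ and $\psi_i=T^{d/2}V_i-\fC_3$ for $i\in\{1,3\}$, so that the second-moment expansion over four independent paths reads
\[ \E(L_T^2)=E_0^{\otimes 4}\Big[\psi_1\psi_3\,\mathrm e^{\b^2\sum_{1\le i<j\le 4}\int_0^T V(W^{\ssup i}_t-W^{\ssup j}_t)\dd t}\Big]. \]
Splitting each time integral as $\int_0^T=\int_0^m+\int_m^{T-m}+\int_{T-m}^T$ and using $V\ge 0$, I factor the exponential into a \emph{main} factor (all six pairs on $[0,m]$ and the two diagonal pairs on $[T-m,T]$) times a \emph{nuisance} factor $\mathcal N\ge 1$ (all six pairs on $[m,T-m]$ and the four cross pairs on $[T-m,T]$). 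With the convention $\fC_4=\fC_3$, the main factor multiplied by $\psi_1\psi_3$ is exactly the integrand $H_mH_{T-m,T}$ of $\cT_2$ in \eqref{eq-H-m}; calling it $Y$, one has $E_0^{\otimes 4}[Y]=\cT_2$ and therefore
\[ \E(L_T^2)=\cT_2+E_0^{\otimes 4}\big[Y(\mathcal N-1)\big]\le \cT_2+E_0^{\otimes 4}\big[Y^+(\mathcal N-1)\big], \]
the inequality because $\mathcal N\ge 1$. It thus suffices to control the last term.

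For a threshold $a>0$ let $A_a$ be the event that every one of the ten nuisance integrals is at most $a$, so that $1\le\mathcal N\le\mathrm e^{10\b^2a}$ on $A_a$. On $A_a$ I bound
\[ E_0^{\otimes 4}\big[Y^+(\mathcal N-1)\mathbf 1_{A_a}\big]\le(\mathrm e^{10\b^2a}-1)\,E_0^{\otimes 4}[Y^+], \]
and since $Y^+\le|\psi_1\psi_3|\cdot(\text{main})$, a first-moment computation --- conditioning on the endpoints $(W^{\ssup i}_T)_i$ and bounding the bridge functionals by Lemma~\ref{lemma2-claim1} --- gives $E_0^{\otimes 4}[Y^+]=O(1)$ uniformly in $T$ (each pinned pair $V_i$ contributes $\int\rho(T,p)^2\dd p\sim T^{-d/2}$, which cancels the $T^{d/2}$ in $\psi_i$); hence this piece tends to $0$ as $a\to 0$. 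On $A_a^c$ I use $Y^+(\mathcal N-1)\le|\psi_1\psi_3|\,\mathrm e^{\b^2\sum_{i<j}\int_0^T V}\mathbf 1_{A_a^c}$ together with $|\psi_1\psi_3|\le(T^{d/2}V_1+\fC_3)(T^{d/2}V_3+\fC_3)$, and reduce matters to showing that the dominant term
\[ T^{d}\,E_0^{\otimes 4}\Big[V_1V_3\,\mathrm e^{\b^2\sum_{1\le i<j\le 4}\int_0^T V(W^{\ssup i}_t-W^{\ssup j}_t)\dd t}\,\mathbf 1_{A_a^c}\Big] \]
(and the two analogous lower-order terms) vanishes as $T\to\8$ for each fixed $a$.

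The main obstacle is precisely the factor $T^{d/2}$ in $\psi_i$: it makes the $L^2$-norm of $Y$ grow like $T^{d/2}$, whereas $P_0^{\otimes 4}(A_a^c)$ decays only like a power of $m=o(T)$, so a direct Cauchy--Schwarz on $A_a^c$ is hopeless. The remedy is to condition on the four endpoints $x=(x_i)_i=(W^{\ssup i}_T)_i$: the weight $T^{d}V(x_1-x_2)V(x_3-x_4)\prod_i\rho(T,x_i)$ then becomes an endpoint density of total mass $O(1)$, while what remains inside the expectation is a pure Brownian-bridge exponential functional whose moments are bounded uniformly in $x$ by a multi-path version of Lemma~\ref{lemma2-claim1} (valid for $\b$ small). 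On the bridge I apply Cauchy--Schwarz and use that the bridge probability of $A_a^c$ tends to $0$ uniformly in $x$: the middle integrals $\int_m^{T-m}$ are controlled by Lemma~\ref{lemma3-claim1}, and the cross-pair integrals over $[T-m,T]$ by time reversal --- negligible when the cross endpoints are far apart (the typical case), and otherwise confined to an endpoint region of measure $o(T^{-d})$ whose contribution dies after multiplication by $T^{d}$. Summing the three terms from $|\psi_1\psi_3|$ and letting $T\to\8$ and then $a\to 0$ gives $\limsup_T\E(L_T^2)\le\limsup_T\cT_2$, which is the assertion (indeed with $C=1$). The delicate point throughout is to keep the signed product $\psi_1\psi_3$ intact: replacing it by $|\psi_1\psi_3|$ would be fatal, since $E_0^{\otimes 4}[|\psi_1\psi_3|\cdot(\text{main})]=O(1)$ does not vanish, and it is the cancellation carried by $\cT_2$ that ultimately forces $\E(L_T^2)\to 0$.
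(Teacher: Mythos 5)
Your proposal is correct and takes essentially the same route as the paper: the same threshold-$a$ splitting of the nuisance interactions (all pairs on $[m,T-m]$, cross pairs on $[T-m,T]$), the same conditioning on the endpoints so that the Gaussian weights absorb the $T^{d/2}$ factors, and the same key inputs (H\"older's inequality, Lemma \ref{lemma2-claim1} and Lemma \ref{lemma3-claim1}). Your additive bookkeeping $\E(L_T^2)=\cT_2+E_0^{\otimes 4}[Y(\mathcal N-1)]$ with $Y\le Y^{+}$ and $\mathcal N\ge 1$ treats the signed factor $H_{T-m,T}$ somewhat more carefully than the paper's multiplicative bound on $\mathcal T_2^{\mathrm{(I)}}$, and sending $a\to 0$ after $T\to\infty$ even yields the constant $C=1$.
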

Then, Proposition \ref{prop-second-moment} will be a consequence of 
\begin{proposition}\label{prop-claim4}
For small enough $\b$, we have as $T\to\infty$:
\begin{equation} \label{eq:prop-claim4}
\begin{aligned}
 \cT_2 &=  E_{0}^{\otimes 4}   \bigg[ \mathrm e^{\b^2 \sum_{1\leq i < j \leq 4}  \int_0^\infty V(W^{(i)}_t\!-\!W^{(j)}_t)\dd t} \bigg] \\
 &\qquad\qquad\times \left[E_{0}^{\otimes 2}   \left(  \mathrm e^{\b^2\int_{T-m}^T V(W^{(1)}_t\!-\!W^{(2)}_t)\dd t} \left[ T^{\frac{d}{2}}  V(W^{(1)}_T\!-\!W^{(2)}_T) - \fC_3 \right] \right) \right]^2 + o(1).
 \end{aligned}
\end{equation}
As a result, $\cT_2 \to 0$.
 \end{proposition}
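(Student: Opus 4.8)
The plan is to exploit the temporal separation between the early window $[0,m]$, where $H_m$ couples all four paths, and the late window $[T-m,T]$, where $H_{T-m,T}$ couples only the two pairs $(1,2)$ and $(3,4)$: since $m\to\infty$ while $m=o(T)$, these two windows decouple asymptotically, and within the late window the two pairs decouple from each other. Concretely, I would condition on the $\sigma$-field $\cF_{[0,m]}$ generated by all four paths up to time $m$. Since $H_m$ is $\cF_{[0,m]}$-measurable and the four paths are independent, given their positions at time $m$ the conditional expectation of $H_{T-m,T}$ factorizes over the two pairs, so that
\[
\cT_2 = E_{0}^{\otimes 4}\Big[ H_m \prod_{i\in\{1,3\}} \Psi_T\big(W^{\ssup i}_m, W^{\ssup{i+1}}_m\big)\Big],
\]
where $\Psi_T(x,x')$ denotes the single-pair late factor, i.e. the expectation of $e^{\b^2\int_{T-m}^T V(W_t-W'_t)\dd t}\big(T^{d/2}V(W_T-W'_T)-\fC_3\big)$ for two independent Brownian motions started at $x,x'$ at time $m$ (it depends only on $x-x'$).

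The core step is a ``forgetting of the initial condition'' estimate: $\Psi_T(x,x')$ is asymptotically independent of $(x,x')$ and equals the late factor $\cL_T:=E_{0}^{\otimes 2}\big[e^{\b^2\int_{T-m}^T V(W^{\ssup1}_t-W^{\ssup2}_t)\dd t}(T^{d/2}V(W^{\ssup1}_T-W^{\ssup2}_T)-\fC_3)\big]$ of \eqref{eq:prop-claim4}. Writing both as integrals over the pair's relative position $w$ at time $T-m$, the two quantities differ only through the Gaussian density of $w$, namely $N(x-x',2(T-2m))$ versus $N(0,2(T-m))$. The late-window functional is, up to negligible mass, supported on $|w|$ of order at most $\sqrt m$ (the endpoint must reach the compact support of $V$), a region on which both densities, spread over the scale $\sqrt T$, are asymptotically flat and insensitive to the $o(\sqrt T)$ shift $x-x'$ and to the $O(m)$ variance discrepancy; this is the four-path analogue of the flattening $T^{d/2}\rho(T,y)\to(2\pi)^{-d/2}$ used earlier. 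I would make this rigorous by conditioning on $w$, expressing the bridge densities via Lemma \ref{lemma1-claim1} and bounding the exponential functional by the Khas'minskii estimate of Lemma \ref{lemma2-claim1}, obtaining $\Psi_T$ uniformly bounded and $\Psi_T(x,x')=\cL_T+o(1)$ uniformly over $|x-x'|\le R\sqrt m$. Since the time-$m$ positions are tight on the scale $\sqrt m$ and $H_m\in L^1$ for small $\b$, dominated convergence lets me replace each $\Psi_T(W^{\ssup i}_m,W^{\ssup{i+1}}_m)$ by $\cL_T$ with vanishing error, giving $\cT_2=\cL_T^2\,E_{0}^{\otimes 4}[H_m]+o(1)$; monotone convergence then yields $E_{0}^{\otimes 4}[H_m]\to E_{0}^{\otimes 4}\big[e^{\b^2\sum_{i<j}\int_0^\infty V(W^{\ssup i}_t-W^{\ssup j}_t)\dd t}\big]<\infty$, which is precisely \eqref{eq:prop-claim4}.

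To deduce $\cT_2\to0$ it then suffices to show $\cL_T\to0$, which follows from the same mechanism already established for $\E(L_T)$ in Proposition \ref{cor-LT}. Splitting $\cL_T=\cL_T^{\ssup1}-\cL_T^{\ssup2}$ with $\cL_T^{\ssup1}=E_{0}^{\otimes2}[e^{\b^2\int_{T-m}^T V}T^{d/2}V(W^{\ssup1}_T-W^{\ssup2}_T)]$ and $\cL_T^{\ssup2}=\fC_3\,E_{0}^{\otimes2}[e^{\b^2\int_{T-m}^T V}]$, I would condition $\cL_T^{\ssup1}$ on the terminal relative position and use time reversal of the bridge exactly as in Proposition \ref{prop-claim2} to get $E_{0,0}^{T,y}[e^{\b^2\int_{T-m}^T V(\sqrt2 W)}]\to E_y[e^{\b^2\int_0^\infty V(\sqrt2 W)}]$, together with $T^{d/2}\rho(T,y)\to(2\pi)^{-d/2}$, so that $\cL_T^{\ssup1}\to\fC_3$ by the very definition \eqref{eq:fC_4}. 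For $\cL_T^{\ssup2}$, the relative path stays at distance of order $\sqrt T$ from the support of $V$ throughout the window of length $m=o(T)$, whence $E_{0}^{\otimes2}[e^{\b^2\int_{T-m}^T V}]\to1$ and $\cL_T^{\ssup2}\to\fC_3$. Thus $\cL_T\to0$, and since the four-path factor in \eqref{eq:prop-claim4} is a finite constant, $\cT_2\to0$.

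I expect the main obstacle to be the uniform forgetting estimate $\Psi_T(x,x')=\cL_T+o(1)$: one must control the Brownian-bridge densities and the exponential functional simultaneously and uniformly over starting positions on the $\sqrt m$ scale. This is the four-path analogue of the two-path bridge computations in Propositions \ref{prop-claim1}--\ref{prop-claim2}, but with the extra bookkeeping of decoupling the two pairs and of matching the mean and variance corrections of the relative displacement at scale $\sqrt T$.
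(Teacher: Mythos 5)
Your proposal is correct, but it implements the decoupling in a mirrored way compared with the paper. The paper conditions on the four positions $\big(W^{\ssup i}_{T/2}\big)_{i=1}^4$ at the \emph{midpoint}: since $H_m$ and $H_{T-m,T}$ live on disjoint sides of $T/2$, $\cT_2$ factors into the product of two conditional expectations, and the proof then consists of three facts --- uniform boundedness of both conditional factors (via Lemma \ref{lemma1-claim1} and the bridge Khas'minskii bound of Lemma \ref{lemma2-claim1}, exactly the tools you name), and convergence in law of the conditioned \emph{early} factor $E_{0,0}^{T/2,\sqrt T\mathbf z}[H_m]$ to the constant $E_0^{\otimes 4}[H_\infty]$ by the same mechanism as \eqref{eq:BBToBM}; the late factor is never shown to converge, only to be bounded, and the product limit follows from ``bounded times converging-to-a-constant''. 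You instead condition on $\cF_{[0,m]}$, keep $H_m$ as a random prefactor, and push all the analysis into a uniform ``forgetting of the initial condition'' estimate $\Psi_T(x,x')=\cL_T+o(1)$ for the \emph{late} factor. This is a genuinely stronger statement than what the paper needs (uniformity in the starting points on the $\sqrt m$ scale, versus pointwise-in-law convergence of the other factor), but it buys you a more transparent identification of the squared term in \eqref{eq:prop-claim4}, since the factorization over the pairs $(1,2)$ and $(3,4)$ is immediate once you condition at time $m$. Two points need minor repair in your write-up: (a) your concentration claim ``the late-window functional is supported on $|w|\lesssim\sqrt m$'' is false as stated for the $-\fC_3$ component, which is of order one everywhere; the fix is to split $\Psi_T$ into the $T^{d/2}V$-part and the part $\fC_3\big(E_w[\mathrm e^{\b^2\int_0^m V}]-1\big)$, both of which \emph{are} concentrated, while the remaining constant $-\fC_3$ integrates identically against the two probability densities $N(x-x',2(T-2m))$ and $N(0,2(T-m))$ and cancels exactly; (b) dominated convergence for $E_0^{\otimes 4}\big[H_m\big(\prod_i\Psi_T-\cL_T^2\big)\big]$ requires slightly more than $H_m\in L^1$, namely $\sup_m\|H_m\|_{1+\delta}<\infty$ (H\"older plus Khas'minskii, valid for small $\b$), to control the contribution of the low-probability event where the time-$m$ positions exceed $R\sqrt m$. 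Your closing step $\cL_T\to 0$ (bridge reversal giving $\cL_T^{\ssup 1}\to\fC_3$ via \eqref{eq:fC_4}, transience giving $\cL_T^{\ssup 2}\to\fC_3$) is exactly the paper's appeal to the computations of Section \ref{sec:warmup}.
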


\subsection{Proof of Proposition \ref{prop-claim3}.}\label{sec:proof-claim3}


We pick up from the first display in \eqref{eq1-second}, and write 
\begin{equation}\label{eq2-second}
\E(L_T^2) = \mathcal T_2^{\mathrm{(I)}} + \mathcal T_2^{\mathrm{(II)}}
\end{equation}
where 
\begin{equation}\label{eq1-T2-II}
\begin{aligned}
\mathcal T_2^{\mathrm{(II)}} &=  E_{0}^{\otimes 4} \bigg[ \mathrm e^{\b^2 \sum_{1\leq i < j \leq 4}  \int_0^T V(W^{\ssup i}_t\!-\!W^{\ssup j}_t)\dd t} \,\,
 \prod_{i=\{1,3\}} \bigg(T^{d/2} V(W^{\ssup i}_T\!-\!W^{\ssup{i+1}}_T) - \fC_3 \bigg) \\
&\qquad\qquad \times  \mathbf 1\bigg\{\int_{m}^{T-m} V(W^{\ssup i}_t\!-\!W^{\ssup j}_t)\dd t \geq a\,\,\mbox{for some}\,\, 1\leq i < j \leq 4\bigg\}\bigg] 
\end{aligned}
\end{equation}
and $\mathcal T_2^{\mathrm{(I)}}$ is defined canonically. We claim that, 
\begin{equation}\label{claim-T2-II}
\limsup_{T\to\infty} \, \mathcal T_2^{\mathrm{(II)}}=0.
\end{equation}
To prove the above claim, in \eqref{eq1-T2-II} we first estimate, using that $V,\fC_3\geq 0$, 
$$
\prod_{i\in\{1,3\}} \bigg[T^{d/2} V(W^{\ssup i}_T\!-\!W^{\ssup{i+1}}_T) - \fC_3\bigg] \leq  \prod_{i\in\{1,3\}} \bigg[T^{d/2} V(W^{\ssup i}_T\!-\!W^{\ssup{i+1}}_T)\bigg] + \fC_3^2.
$$
Note that,  
$$
\begin{aligned}
\fC_3^2 \limsup_{T\to\infty}\,\, E_{0}^{\otimes 4}& \bigg[ \mathrm e^{\b^2 \sum_{1\leq i < j \leq 4}  \int_0^T V(W^{\ssup i}_t\!-\!W^{\ssup j}_t)\dd t} \\
&\qquad \times \mathbf 1\bigg\{\int_{m}^{T-m} V(W^{\ssup i}_t\!-\!W^{\ssup j}_t)\dd t \geq a\,\,\mbox{for some}\,\, 1\leq i < j \leq 4\bigg\}\bigg] =0,
\end{aligned}
$$
by H\"older's inequality combined with Khas'minskii's lemma and Lemma \ref{lemma2.5-claim1}. Therefore, 
\begin{equation}\label{eq2-T2-II}
\begin{aligned}
\mathcal T_2^{\mathrm{(II)}} &\leq  E_{0}^{\otimes 4} \bigg[ e^{\b^2 \sum_{1\leq i < j \leq 4}  \int_0^T V(W^{(i)}_t\!-\!W^{(j)}_t)\dd t} \,\,
 \prod_{i=\{1,3\}} \bigg(T^{d/2} V(W^{(i)}_T\!-\!W^{(i+1)}_T)  \bigg) \\
&\qquad\qquad \times  \mathbf 1\bigg\{\int_{m}^{T-m} V(W^{(i)}_t\!-\!W^{(j)}_t)\dd t \geq a\,\,\mbox{for some}\,\, 1\leq i < j \leq 4\bigg\}\bigg] + o(1).
\end{aligned}
\end{equation}
Next, we switch from free Brownian motion to the Brownian bridge in the first term on the right hand side above, such that, writing $\mathbf y=(y_1,\dots,y_4)$, we get
$$
\begin{aligned}
\mathcal T_2^{\mathrm{(II)}}
&\leq  \int_{(\mathbb R^d)^4} \dd \mathbf y\,\, \prod_{i\in\{1,3\}}\bigg(T^{-d/2} \mathrm e^{-\frac{|y_i|^2+|y_{i+1}|^2}{2T}}\, V(y_i-y_{i+1})\bigg) \\
& \qquad\bigotimes_{i=1}^4 E_{0,0}^{T,y_i} \bigg[ \mathrm e^{\b^2 \sum_{1\leq i < j \leq 4}  \int_0^T V(W^{(i)}_t\!-\!W^{(j)}_t)\dd t}  \\
 &\qquad\qquad\qquad\times\mathbf 1\bigg\{\int_{m}^{T-m} V(W^{(i)}_t\!-\!W^{(j)}_t)\dd t \geq a\,\,\mbox{for some}\,\, 1\leq i < j \leq 4\bigg\}\bigg] +o(1).
\\
\end{aligned}
$$
Note that $V$ has support in a ball of radius $1$ around $0$. We now again use H\"older's inequality which, combined with Lemma \ref{lemma2-claim1} and Lemma \ref{lemma3-claim1} finish the proof of \eqref{claim-T2-II}.

We now turn to estimate $\mathcal T_2^{\mathrm{(I)}}$, which, by the second display in \eqref{eq1-second}, \eqref{eq2-second} and \eqref{eq1-T2-II} is given by 
\[
\begin{aligned}
\mathcal T_2^{\mathrm{(I)}} &=\,\,  E_{0}^{\otimes 4} \bigg[ H_m\,\, H_{T-m,T}\,\,
\,\, \bigg\{\mathrm e^{\b^2 \sum^\star \int_{T-m}^T V(W^{(i)}_t\!-\!W^{(j)}_t)\dd t}\bigg\} \\
&\qquad\qquad \times \mathrm e^{\b^2 \sum_{1\leq i < j \leq 4}  \int_m^{T-m} V(W^{(i)}_t\!-\!W^{(j)}_t)\dd t} \mathbf 1\bigg\{\int_{m}^{T-m} V\left(W^{(i)}_t\!-\!W^{(j)}_t\right)\dd t \leq a\,\,\mbox{for all}\,\, 1\leq i < j \leq 4\bigg\}\bigg] 
\\
&\leq \mathrm e^{6\b^2 a} E_{0}^{\otimes 4} \bigg[ H_m\,\, H_{T-m,T}\,\,
\,\, \bigg\{\mathrm e^{\b^2 \sum^\star \int_{T-m}^T V(W^{(i)}_t\!-\!W^{(j)}_t)\dd t}\bigg\}\bigg].
\end{aligned}
\]
We again want to ignore the contribution of the last term. But this can be done exactly as in the way we estimated $\mathcal T_2^{\mathrm{(II)}}$ by splitting interactions for $(i,j)\in \sum^\star$ when 
$ \int_{T-m}^T V(W^{\ssup i}_t\!-\!W^{\ssup j}_t)\dd t \geq a$ and $\int_{T-m}^T V(W^{\ssup i}_t\!-\!W^{\ssup j}_t)\dd t \leq a$. In order to avoid repetition we omit the details, and conclude the proof of Proposition \ref{prop-claim3}.
\qed

\medskip

\subsection{Proof of Proposition \ref{prop-claim4}.}\label{sec:proof-claim4}

If we denote by $\mathcal F_{[0,T/2]}$ the $\sigma$-algebra generated by all four Brownian paths until time $T/2$, then, using Markov's property, 
\[
\begin{aligned}
\mathcal T_2= E_0^{\otimes 4}[H_m\,\, H_{T-m,T}] 
&=E_0^{\otimes 4}\left[E_0^{\otimes 4}\left(H_m\,\, H_{T-m,T}\middle| \big(W_{T/2}^{\ssup i}\big)_{i=1}^4\right)\right] \\
&=E_0^{\otimes 4}\left[E_0^{\otimes 4}\left\{E_0^{\otimes 4}\left(H_m\,\, H_{T-m,T}\middle| \mathcal F_{[0,T/2]}\right) \middle | \big(W_{T/2}^{(\ssup i}\big)_{i=1}^4\right\} \right] \\
&=E_0^{\otimes 4}\left[E_0^{\otimes 4}\left\{H_m\middle| \big(W_{T/2}^{\ssup i}\big)_{i=1}^4\right\} E_0^{\otimes 4}\left\{H_{T-m,T}\middle| \big(W_{T/2}^{\ssup i}\big)_{i=1}^4\right\}\right].
\end{aligned}
\]
We will prove that there exists a constant $C<\infty$, such that: 
\begin{equation*}
\begin{gathered}
\text{(i)} \sup_{T>0} E_0^{\otimes 4}\left\{H_m\middle| \big(W_{T/2}^{\ssup i}\big)_{i=1}^4\right\} \leq C \qquad \text{(ii)} \sup_{T>0} E_0^{\otimes 4}\left\{H_{T-m,T}\middle| \big(W_{T/2}^{\ssup i}\big)_{i=1}^4\right\} \leq C,\\
\text{(iii)} \  E_0^{\otimes 4}\left\{H_m\middle| \big(W_{T/2}^{\ssup i}\big)_{i=1}^4\right\} \cvlaw E_0^{\otimes 4}\left[H_\infty\right], \text{ as } T\to\infty.
\end{gathered}
\end{equation*}
where $H_\infty$ is defined as $H_m$ with the time interval $[0,m]$ replaced by $[0,\infty)$, recall \eqref{eq-H-m}. 

Let us first conclude the proof of Proposition \ref{prop-claim4} assuming the above three assertions. The difference of the two first terms in \eqref{eq:prop-claim4} writes:
\begin{align*}
&\mathcal T_2 - E_0^{\otimes 4}\left[ H_\infty \right] E_0^{\otimes 4}\left[H_{T-m,T}\right]
 \\
&= E_0^{\otimes 4}\left[\left(E_0^{\otimes 4}\left\{H_m\middle| \big(W_{T/2}^{\ssup i}\big)_{i=1}^4\right\} - E_0^{\otimes 4}\left[ H_\infty \right]\right)
 E_0^{\otimes 4}\left\{H_{T-m,T}\middle| \big(W_{T/2}^{\ssup i}\big)_{i=1}^4\right\}\right],
\end{align*}
which goes to $0$ as $T\to\infty$ by (i)-(iii), proving \eqref{eq:prop-claim4}. Finally, computations of Section 3.1 ensure that:
\[
\bigg[E_0^{\otimes 2}\bigg\{\mathrm e^{\beta^2 \int_{T-m}^T V(W^{\ssup 1}_s- W^{\ssup 2}_s) \dd s}\,\,\bigg(T^{d/2}V\left(W^{\ssup 1}_T- W^{\ssup 2}_T\right)- \fC_3\bigg)\bigg\}\bigg] \underset{T\to\infty}{\longrightarrow}0.
\]

We now owe the reader the proofs of (i)-(iii). To prove (i), we use H\"older's inequality to get 
\[
\begin{aligned}
E_0^{\otimes 4}\left\{H_m\middle| \big(W_{T/2}^{(i)}\big)_{i=1}^4\right\} &\leq \prod_{1\leq i < j \leq 4} E_0^{\otimes 4}\left[\mathrm e^{6\beta^2 \int_0^m V(W^{\ssup i}_t\!-\!W^{\ssup j}_t)\dd t}\middle| \big(W_{T/2}^{\ssup i}\big)_{i=1}^4\right]^{1/6} \\
&= \prod_{1\leq i < j \leq 4} E_{0,0}^{T/2,W_{T/2}^{\ssup i}-W_{T/2}^{\ssup j}} \bigg[\mathrm e^{6\beta^2 \int_0^m V(\sqrt 2 W_t)\dd t}\bigg]^{1/6} \\
&\leq \sup_{T,z} E_{0,0}^{T/2,z} \bigg[e^{6\beta^2 \int_0^{T/2} V(\sqrt 2 W_t)\dd t}\bigg] <\infty,
\end{aligned}
\]
by Lemma \ref{lemma2-claim1}. For (ii), we note that by Markov's property, 
\[
\begin{aligned}
E_0^{\otimes 4}\left\{H_{T-m,T}\middle| \big(W_{T/2}^{\ssup i}\big)_{i=1}^4\right\} &=\prod_{i\in\{1,3\}} E_{W_{T/2}^{\ssup i}-W_{T/2}^{\ssup{i+1}}} \left[ \mathrm e^{\beta^2 \int_{T/2-m}^{T/2} V(\sqrt 2 W_t) \dd t}\,\left(T^{d/2} V\left(\sqrt 2 W_{T/2}\right)- \fC_3\right)\right].
\end{aligned}
\]
We have:
\[
\fC_3\, E_{W_{T/2}^{\ssup i}-W_{T/2}^{\ssup{i+1}}} \bigg[ \mathrm e^{\beta^2 \int_{T/2-m}^{T/2} V(\sqrt 2 W_t) \dd t}\bigg]\leq \fC_3 \sup_{z} E_z \bigg[\mathrm e^{\beta^2 \int_0^\infty V(\sqrt 2 W_t) \dd t}\bigg]<\infty,
\]
while, for some constant $C'>0$, 
\[
\begin{aligned}
&E_{W_{T/2}^{\ssup i}-W_{T/2}^{\ssup{i+1}}} \bigg[ e^{\beta^2 \int_{T/2-m}^{T/2} V(\sqrt 2 W_t) \dd t}\,\, T^{d/2}V\left(\sqrt 2 W_{T/2}\right)\bigg] \\
&\leq C'\int_{\mathbb R^d} \dd z \,\,\, E_{0,W_{T/2}^{\ssup i}-W_{T/2}^{\ssup{i+1}}}^{T/2,z}\,\,\, \bigg[ \mathrm e^{\beta^2 \int_{T/2-m}^{T/2} V(\sqrt 2 W_t) \dd t}\bigg]\,\, V\left(\sqrt 2 z\right) \\
&\leq C'  \sup_{T,y,z}\,\,  E_{0,y}^{T/2,z} \bigg[ \mathrm e^{\beta^2 \int_0^{T/2} V(\sqrt 2 W_t)\,\,\dd t}\bigg] \,\, \int_{B(0,1)} \dd z \,\, V\left(\sqrt 2 z\right)\\
&<\infty,
\end{aligned}
\]
again by Lemma \ref{lemma2-claim1}. 

Finally, to prove (iii), we fix any smooth test function $f:\mathbb R\to\mathbb R$, so that 
\begin{align}
E_0^{\otimes 4} \bigg[ f\left(E_0^{\otimes 4}\left\{H_m\middle| \big(W_{T/2}^{\ssup i}\big)_{i=1}^4\right\}\right)\bigg]&= \int_{(\mathbb R^d)^4} \dd \mathbf y \,\, f\bigg(E_{0,0}^{T/2,\mathbf y}\left[H_m\right]\bigg) \prod_{i=1}^4 \rho(T/2,y_i) \nn\\
&= \int_{(\mathbb R^d)^4} \dd \mathbf z\,\,  f\bigg(E_{0,0}^{T/2,\sqrt T \mathbf z}\left[H_m\right]\bigg)  \prod_{i=1}^4 \rho(1/2,z_i). \label{eq-claim3-prop-claim4}
\end{align}
Now, letting $T\to\infty$, we get similarly to \eqref{eq:BBToBM} that $E_{0,0}^{T/2,\sqrt T \mathbf z}\left[H_m\right]\to E_0^{\otimes 4}\left[H_\infty\right]$. By dominated convergence, the RHS of \eqref{eq-claim3-prop-claim4} converges to $f\left(E_0^{\otimes 4}\left[H_\infty\right]\right)$, implying (iii).
\qed

\noindent{\bf{Acknowledgement:}} The authors would like to thank the ICTS, Bangalore for the hospitality during the program {\it{Large deviation theory in statistical physics}}
(ICTS/Prog-ldt/2017/8), 
where the present work was initiated. 

{\small

}


\begin{thebibliography}{WWWW98}

\bibitem[CC18]{CC18}
{\sc F. Comets} and {\sc C. Cosco},
\newblock{ Brownian Polymers in Poissonian Environment: a survey}, 
\newblock{arXiv:1805.10899}  (2018)

\smallskip 

\bibitem[CL17]{CL17}
{\sc F. Comets} and {\sc Q. Liu},
\newblock{ Rate of convergence for polymers in a weak disorder}, 
\newblock{\it J. Math. Anal. Appl.} {\bf 455} (2017), 312-335

\smallskip

\bibitem[CN95]{CN95}
{\sc F. Comets} and {\sc J. Neveu},
\newblock{The Sherrington-Kirkpatrick model of spin glasses and stochastic calculus: the high temperature case},
\newblock{\it Comm. Math. Phys.} {\bf 166} (1995), 349-364


\smallskip 

\bibitem[CSY04]{CSY04}
  {\sc F. Comets}, {\sc T. Shiga} and {\sc N. Yoshida},
\newblock{Probabilistic analysis of directed polymers in a random environment: a review.}
\newblock{Stochastic analysis on large scale interacting systems},  
{\it Adv. Stud. Pure Math.} {\bf 39}, 115--142, 2004

\smallskip 

\bibitem[GRZ17]{GRZ17}
{\sc Y. Gu}, {\sc L. Ryzhik} and {\sc O. Zeitouni}, 
\newblock{The Edwards-Wilkinson limit of the random heat equation in dimensions three and higher},
\newblock{arXiv:1710.00344} (2017)

\smallskip

\bibitem[HL15]{HL15}
{\sc E. H\"ausler} and {\sc H. Luschgy}, 
\newblock{\it Stable convergence and stable limit theorems},
\newblock{vol. 74}, Springer, Cham (2015)

\smallskip

\bibitem[JS87]{JS87}
{\sc J. Jacod} and {\sc A. Shiryaev},
\newblock{\it Limit theorems for stochastic processes}, Springer-Verlag, Berlin (1987)

\smallskip

\bibitem[K90]{Kunita}
{\sc H. Kunita},
\newblock{\it Stochastic flows and stochastic differential equations}, Cambridge University Press  (1990)

\smallskip


\bibitem[MU17]{MU17}
{\sc J. Magnen} and {\sc J. Unterberger}, 
\newblock{The scaling limit of the KPZ equation in space dimension 3 and higher}, 
\newblock{\it Journal of Statistical Physics.} {\bf 171:4}, 543-598, (2018)


\smallskip

\bibitem[M17]{M17}
{\sc C. Mukherjee},
\newblock{A central limit theorem for the annealed path measures for the stochastic heat equation and the continuous directed polymer in $d\geq 3$}, 
\newblock{arXiv:1706.09345} (2017)

\smallskip

\bibitem[MSZ16]{MSZ16}
{\sc C. Mukherjee}, {\sc A. Shamov} and {\sc O. Zeitouni},
\newblock{Weak and strong disorder for the stochastic heat equation and the continuous directed polymer in $d\geq 3$}, 
\newblock{\it Electr. Comm. Prob.} {\bf 21} (2016), 12 pp., Available at {arXiv:1601.01652}

\end{thebibliography}
\end{document}